\tikzstyle{arrow}=[draw=black,arrows=-latex]
\newtheorem{theorem}{Theorem}[section]
\newtheorem{lemma}[theorem]{Lemma}
\theoremstyle{definition}
\theoremstyle{remark}
\newcounter{smalllist}
\DeclareMathOperator*{\sgn}{sgn}
\numberwithin{equation}{section}
\newcommand{\lb}{\label}
\newcommand{\beq}{\begin{equation}}
\newcommand{\eeq}{\end{equation}}
\newcommand{\bal}{\begin{align}}
\newcommand{\eal}{\end{align}}
\newcommand{\bals}{\begin{align*}}
\newcommand{\eals}{\end{align*}}
\newcommand{\bbN}{{\mathbb{N}}}
\newcommand{\bbR}{{\mathbb{R}}}
\newcommand{\bbZ}{{\mathbb{Z}}}
\newcommand{\calF}{{\mathcal F}}
\newcommand{\eps}{\varepsilon}
\begin{document}
%\title[Local Regularity for the Muskat Problem]
%{The 2D Muskat Problem I: \\ Local Regularity on the Half-Plane, Plane, and Strips}
\title[Stable Regime Singularity for the Muskat Problem]
{The 2D Muskat Problem II: \\ Stable Regime Small Data Singularity on the Half-plane}

\author{Andrej Zlato\v s}
\address{\noindent Department of Mathematics \\ UC San Diego \\ La Jolla, CA 92093, USA \newline Email:
zlatos@ucsd.edu}

%\thanks{The author acknowledges partial support by the NSF through the grant DMS-0632442}

\begin{abstract}
We study the Muskat problem on the half-plane, which models motion of an interface between two fluids of distinct densities (e.g., oil and water) in a porous medium (e.g., an aquifer) that sits atop an impermeable layer (e.g., bedrock).  Existence of finite time stable regime interface curve singularities is still open on the whole plane, but we show that they do arise on the half-plane, including from arbitrarily small smooth initial data.  To obtain this result, we establish maximum principles for both the potential energy and the slope of solutions in this model, as well as develop a general local well-posedness theory in the companion paper \cite{ZlaMuskatLocal}.
\end{abstract}

\maketitle

%%%%%%%%%%%%%%%%%%%%%%%%%%%%%%%%%%%%%%%%%%%%%%
\section{Introduction and Main Results} \lb{S1}
%%%%%%%%%%%%%%%%%%%%%%%%%%%%%%%%%%%%%%%%%%%%%%

%Can do large data (either sing or overturning)...?  
%Sing at origin if $\psi$ even with $\psi=0$ only at origin...?
%Sharp version of max princ on plane...?  
%Comparison princ with solutions with small $\|f_x\|_{L^\infty}$ on plane (but not half-plane)..?  
%Nonuniqueness of corner solutions (i.e. on quadrant)...  ?
%
%%Singularity after arbitrarily long time and sol remaining very flat (othws proof breaks down! - so find setup when can have control; but now small data not small parameter) and blow-up generic...  
%
%------------

The {\it Muskat problem}  models motion of the interface between two incompressible immiscible fluids of different densities $\rho_1>\rho_0$ (such as water and oil, or salt water and fresh water) inside a porous medium (such as a  sand or sandstone aquifer) \cite{Muskat, Bear}.    It involves the PDE
\begin{equation} \lb{1.1}
\partial_{t} \rho + u\cdot\nabla \rho =0
\end{equation}
for the fluid density 
%$\rho$, with $u$ being  its velocity and the density taking the form
\beq\lb{1.3a}
\rho({\bf x},t)=\rho_1- (\rho_1-\rho_0)\chi_{\Omega_t}({\bf x}),
%+\rho_2\chi_{(\bbR\times\bbR^+)\setminus \Omega_t} ({\bf x}),
\eeq
where $u$ is the fluid velocity and $\Omega_t$ is the (time-dependent) region of the lighter fluid.
% (which is then transported by $u$).  
The velocity is  determined via {\it Darcy's law}
\begin{equation} \lb{1.2}
u:=-\nabla p - (0,\rho) \qquad \text{and}\qquad \nabla\cdot u=0,
\end{equation}
 which takes this form after  temporal scaling of \eqref{1.3a} by a factor that is the product of the gravitational constant, permeability of the medium, and the reciprocal of the fluid viscosity.  Here $p$ is the fluid pressure, which is uniquely determined by the incompressibility constraint $\nabla\cdot u=0$, and  $-(0,\rho)$ represents downward motion of denser fluid regions due to gravity (with $0$ the zero vector in one fewer dimensions than ${\bf x}$).
 
The Muskat problem is related to the Hele-Shaw cell problem \cite{SafTay, Hele-Shaw}, and has even been used to  model cell velocity in the growth of tumors \cite{Friedman, Pozrikidis}.   One can also consider it for a single fluid with a continuously varying density, when the model is known as the {\it incompressible porous medium (IPM) equation}. 

We consider here the Muskat problem in two dimensions, where it has been studied extensively on the whole plane $\bbR^2$.  
Local regularity on $\bbR^2$  was proved in \cite{CorGanMuskat} for fluid interfaces that are graphs of  $H^3(\bbR)$   functions of the horizontal variable (see also \cite{SCH,DLL} for small and monotone initial data results), 
provided the lighter fluid lies above the heavier one.  This is the {\it stable regime}, when the Rayleigh-Taylor condition  holds \cite{Rayleigh, SafTay}, and the problem is ill-posed in Sobolev spaces when the heavier fluid lies above the lighter one anywhere along the interface  \cite{CorGanMuskat}.  Numerous other stable regime well-posedness  results on the plane were obtained in the last two decades, including local regularity for arbitrarily large data and global regularity for small data in various subcritical and critical Sobolev spaces.  We only mention here results from \cite{AlaNgu4, Cameron2} in the critical spaces $H^{3/2}(\bbR)$ (for large data) and $\dot W^{1,\infty}(\bbR)$ (for small data in 3D but the result also applies in 2D),
%We mention here only the works most relevant to the present paper 
and refer the reader to our companion paper \cite{ZlaMuskatLocal} and reviews \cite{GanRev, GraLaz} for many additional  references.

The limitation to the stable regime does not apply in the space of analytic interfaces \cite{CCFGL}, and it was in fact shown in \cite{CCFGL, CCFG} that there are initial $H^4(\bbR)$ interfaces that instantly become analytic, then develop {\it overturning} in finite time, and later lose analyticity (and even $C^4$ regularity) while remaining in the (unstable) overturned regime (cf. \cite{CGZ2}).  Hence these papers demonstrated existence of overturning in finite time in Sobolev spaces, at which point local regularity breaks down and the interface has a vertical tangent --- while remaining a smooth planar {\it curve} --- as well as (post-overturning)  finite time loss of interface regularity in the space of analytic curves.  Nevertheless, the latter is due to  Rayleigh-Taylor instability when the model is in the unstable regime, rather than a direct result of {\it super-criticality} of the PDE (which is due to $u$ having the same level of regularity as $\rho$).
%, making \eqref{1.1} somewhat similar to the Burgers' equation).
%One cannot do ``better'' on $\bbR^2$, at least within the class of $H^4(\bbR)$ interfaces, because all stable regime solutions remain analytic prior to overturning \cite{CCFGL}.  In fact, 
Possible development of finite time singularities for the Muskat problem on $\bbR^2$ in the stable regime (while the interface slope stays bounded) remains an outstanding open question, despite the instant analyticity result for $H^4(\bbR)$ interfaces from \cite{CCFGL} and other instant smoothing results for less regular initial data, including some with infinite slopes \cite{AlaNgu4} and corners \cite{GGNP, GGHP, Cameron2}.

However, since aquifers typically sit atop impermeable rocky layers, the Muskat problem has a particular physical relevance when considered on the half-plane  $\bbR\times\bbR^+$.  An additional appeal of this setting is that it allows one to  model dynamically important situations when the heavier fluid {\it invades} regions initially occupied exclusively  by the lighter fluid by  
%we have $\{a\le x_1\le b\}\cap \Omega_{0}=\emptyset$ for some interval $[a,b]$ and 
flowing underneath the latter along the impermeable bottom (or vice versa when the aquifer lies below an impermeable layer).  The local regularity proof in $H^3(\bbR)$ from  \cite{CorGanMuskat} (and others) does extend to the half plane if the interface stays  uniformly away from the bottom, but constants in the available estimates blow up as the interface approaches $\bbR\times\{0\}$.  Hence this does not allow one to extend the theory to the most interesting setting when the interface touches the bottom, which includes the above invasion scenario.

Nevertheless, it turns out that local regularity for $H^3$ interfaces does hold on the half-plane, and we provide a (much more involved) proof of this in the companion paper \cite{ZlaMuskatLocal}.  There we also show that the setting of zero contact angles between the interface and the boundary, resulting from the $H^3$ level of regularity (prior to any singularity development), is in fact the correct one for the Muskat dynamic.
%It is likely that the approach in \cite{CCFGL} can then be adjusted to show analyticity of the fluid interface, before overturning and in the region where it does not touch the bottom.  But this would still leave open possible emergence of  finite time interface singularities  at $\bbR\times\{0\}$, and in particular in the stable regime as well as within the well-posedness theory in Sobolev spaces.
The obvious next question is emergence of  finite time interface singularities in the stable regime in this setting (and in particular within the well-posedness theory in Sobolev spaces),  with a possible difference form $\bbR^2$  being the solution dynamic at the boundary.
%at $\bbR\times\{0\}$, and in particular in the stable regime as well as .

The main result of the present paper is Theorem \ref{T.1.1} below, which shows that {\it finite time stable regime interface singularities do indeed develop} for the Muskat problem on the half-plane.  Moreover, this can happen for  arbitrarily small smooth initial data (in fact, it happens for all  data from a fairly general class) and after an arbitrarily long time.  The mechanism for this is essentially the invasion scenario described above,  specifically when the invasion happens from both directions, and  the singularity likely develops at the meeting point of the two invading ``fronts''.
%The proof of Theorem \ref{T.1.1} suggests that the fluid interface indeed develops a singularity at the bottom, and more precisely at the meeting point of the two invading ``fronts''.

Our proof has three main ingredients.  The first is the local regularity result from \cite{ZlaMuskatLocal}, which in particular shows that if the interface touches the bottom initially, it cannot ``peel off'' the bottom unless a singularity occurs.  The second is a {\it quantitative $L^2$ maximum principle} for the fluid interface in Theorem \ref{T.1.3}, which shows that unless the interface is close to flat (except on a small set), its $L^2$ norm (when it is appropriately defined and finite) decreases at a uniformly positive rate.  Such a maximum principle is known for the Muskat problem on $\bbR^2$ \cite{CCGS}, and since this $L^2$ norm is also the potential energy of the interface, one can even think of this result as monotonicity of the latter.  This was recently derived for the IPM equation in \cite{KisYao}, and used there to prove that solutions must be asymptotically stratified.  Since topological constraints force Sobolev norms of some solutions to grow as they stratify (unless they become singular), \cite{KisYao} also demonstrated unbounded growth of certain solutions to IPM on a horizontal strip.

The same conclusion can be derived for the Muskat problem on the half plane, once the local well-posedness theory from \cite{ZlaMuskatLocal} and Theorem \ref{T.1.3} are available, because mass conservation forces interfaces that  become more and more flat while still touching the bottom to  have near-vertical tangent lines somewhere.  But just as in \cite{KisYao}, this is not sufficient to prove development of a singularity (of the interface as a curve) or even of interface overturning (while it remains a smooth curve)  in finite time.

Instead, we conclude the proof of existence of finite time singularity using our third ingredient, a maximum principle for the slope of the interface on the half-plane from Theorem~\ref{T.1.2}(ii) below.  We show that if the initial interface is not too steep, then it remains such as long as it remains regular and so its tangents cannot become near-vertical.  It follows that a finite time interface singularity must occur, with no overturning and so in the stable regime, for any initial interface (from an appropriate class) that is  not too steep and  touches the bottom.  It is possible that some version of our approach could be also applicable to the question of finite time singularity for IPM on the half-plane or on horizontal strips, although this will first require extension of the local well-posedness theory to solutions with more general boundary behaviors than what is currently available \cite{CCL}.

We note that a simple proof of a maximum principle for interface slopes on the whole plane was provided in \cite{CorGanMuskatMax} (see also \cite{DLL}), but the half-plane version is much more involved and its proof in fact forms the bulk of this paper.  To illustrate the challenges involved, we note that a version of it on strips $\bbR\times(0,l)$ was obtained in \cite{CorGraOri}, but this result has fairly restrictive hypotheses and only applies to interfaces that are {\it far away} from the strip boundaries.  In contrast, Theorem \ref{T.1.2}(ii) applies to all interfaces on the half-plane with slopes in $[-\frac 3{10},\frac 3{10}]$.

Since this result is of independent interest, we prove it in more generality than we need here,
% (this adds some extra complications), 
namely for interfaces that need not flatten as $|x|\to \infty$ or be periodic, as was the case in all previous local regularity results except for the small data result in \cite{Cameron2}.  We even allow $O(|x|^{1-})$ growth of the interface, which is the optimal power for a general local well-posedness result (see \cite{ZlaMuskatLocal}), and one of the contributions of \cite{ZlaMuskatLocal} is extension of existing theory for the Muskat problem to such interfaces (on the plane, half-plane, and horizontal strips).  %Previously, all local regularity results required interfaces that are either from $H^k(\bbR)$ or  from $H^k_{\rm loc}(\bbR)$ and periodic ($k\ge 2$), except for the small data result on $\bbR^3$ in \cite{Cameron2}.

%This leaves open the question of 
%The question whether the latter situation can develop from a global lighter-above-heavier configuration, via interface {\it overturning}, was answered in the affirmative in \cite{CCFGL} (overturning was studied in many other papers, both analytically and numerically, see e.g. \cite{BCG, GomGra, CGZ, CGZ2,CCFG}).  This work therefore provided an example of  finite time breakdown of local  well-posedness for the Muskat problem on $\bbR^2$.  Moreover, since it was shown in \cite{CCFGL} that solutions with initial interfaces from $H^4(\bbR)$
%% or are periodic and in $H^4_{\rm loc}(\bbR)$ 
%instantly become analytic and remain such until they turn over (if they do), it follows that overturning is the only mechanism for breakdown of local well-posedness on $\bbR^2$ (at least in $H^4(\bbR)$, although instant smoothing is known to occur even for  initial interfaces with corners \cite{GGNP, GGHP}).  
%
%We also note that other works studied the Muskat problem in  settings  with surface tension and for fluids with distinct viscosities (see, e.g.,  \cite{EscMat,CCG}).

\vskip 3mm
\noindent
{\bf Basic setup and local well-posedness.} 
From \eqref{1.2} we see that the  {\it vorticity} $\nabla^\perp\cdot u$ equals $\rho_{x_1}$ if we define $\nabla^\perp:=(\partial_{x_2},-\partial_{x_1})$.  Since we have the {\it no-flow boundary condition} $u_2\equiv 0$ on  $\bbR\times\{0\}$, it follows that
%It is well known that in this case we obtain
\begin{equation} \lb{1.3}
u({\bf x},t):= \nabla^\perp \Delta^{-1} \rho_{x_1}({\bf x},t) = \frac 1{2\pi} \int_{\bbR\times\bbR^+} \left( \frac{({\bf x}-{\bf y})^\perp}{|{\bf x}-{\bf y}|^2} - \frac{({\bf x}-\bar {\bf y})^\perp}{|{\bf x}-\bar {\bf y}|^2} \right) \rho_{x_1}({\bf y},t) \, d{\bf y},
\end{equation}
where $\Delta$ is the Dirichlet Laplacian on $\bbR\times\bbR^+$, $\bar {\bf y}:=(y_1,-y_2)$, and ${\bf y}^\perp:=(y_2,-y_1)$.
%% is the reflection of $y$ across the horizontal axis.  
%We will consider here the case of two incompressible fluids with constant densities $\rho_1>\rho_2$ (e.g., water and petroleum, or salt and fresh water), when \eqref{1.1}--\eqref{1.2} is known as the  {\it Muskat problem}.  In this case we have 
%\beq\lb{1.3a}
%\rho({\bf x},t)=\rho_1\chi_{\Omega_t}({\bf x})+\rho_2\chi_{(\bbR\times\bbR^+)\setminus \Omega_t} ({\bf x}),
%\eeq
%with $\Omega_t\subseteq \bbR\times\bbR^+$ the region of the heavier fluid at time $t\ge 0$.  Stable/unstable....
%When 
 Motion of the fluid interface   $\partial\Omega_t$ (see \eqref{1.3a}) determines the full dynamic for the Muskat problem.  One can use this to derive from \eqref{1.1}--\eqref{1.2} an equivalnet PDE for the fluid interface $x_1\mapsto f(x_1,t)\ge 0$, with $\Omega_t=\{x_2>f(x_1,t)\}$ for $t\ge 0$ when the fluids are in the stable regime.  This is done in \cite{ZlaMuskatLocal} via the same argument as on the plane, and we get
\begin{equation} \lb{1.5}
f_t(x,t) = PV \int_\bbR \left[ \frac{y\, (f_x(x,t)-f_x(x-y,t))}{y^2+(f(x,t)-f(x-y,t))^2} + \frac{y\, (f_x(x,t)+f_x(x-y,t))}{y^2+(f(x,t)+f(x-y,t))^2} \right]  dy,
\end{equation}
where we also assumed without loss that $\rho_1-\rho_0=2\pi$.  The corresponding whole plane version is instead
\begin{equation} \lb{1.13}
f_t(x,t) = PV \int_\bbR  \frac{y\, (f_x(x,t)-f_x(x-y,t))}{y^2+(f(x,t)-f(x-y,t))^2}  dy.
\end{equation}
%dropping the second fraction yields the corresponding  equation \eqref{1.13} on $\bbR^2$.
 
Before stating our main results, let us first introduce the general spaces of interface functions that we will consider in Theorem~\ref{T.1.2}.
% The regularity level of $f(\cdot,t)$ will be $H^3$, as in \cite{CorGanMuskat}, but  we will only require uniform boundedness of $H^2$-norms of $f_x(\cdot,t)$ on unit  intervals and even allow $O(|x|^{1-})$ growth of $f$ as $|x|\to\infty$.  This motivates the following definitions.
For  $k=0,1,\dots $ define the (local) norms
%and $x_0\in\bbR$ let
%decreasing $\beta:[1,\infty)\to(0,1]$ let
\begin{align*}
\|g\|_{\tilde L^2(\bbR)}  := \sup_{x\in\bbR} \|g\|_{L^2([x-1,x+1]) } 
\qquad\text{and}\qquad
%\\ \|f\|_{\tilde L^2_{x_0}(\bbR)} & := \left[\int_\bbR f(x)^2 \min\{1,|x-x_0|^{-2} \} dx \right]^{1/2}
%\qquad (\le 2 \|f\|_{\tilde L^2(\bbR)}) ,
 \|g\|_{\tilde H^k(\bbR)}  := \sum_{j=0}^k \|g^{(j)}\|_{\tilde L^2(\bbR)} 
% + \|g\|_{\tilde L^2(\bbR) },
%\\ \|f\|_{C^{0,\beta}(\bbR)} & := \|f\|_{\dot C^\beta(\bbR)} + \inf_{x\in\bbR} |f(x)| ,  ???
\end{align*}
on the spaces of those $g\in L^2_{\rm loc}(\bbR)$ for which these are finite. Then for  $k\ge 1$ and any $\gamma\in[0,1]$ define the seminorms
\begin{align*}
 \|g\|_{\ddot C^\gamma(\bbR)}  := \sup_{|x- y|\ge 1} \frac{|g(x)-g(y)|}{|x-y|^\gamma} 
 \qquad\text{and}\qquad
 \|g\|_{\tilde H^k_\gamma(\bbR)}  :=  \|g'\|_{\tilde H^{k-1}(\bbR)} 
%+ \sup_{(x,y)\in\bbR\times[1,\infty)} \, \frac 1{y^2 \beta(y)} \left| \int_{-y}^y f(x+z)\sgn z \, dz \right| .
+ \|g\|_{\ddot C^{1-\gamma}(\bbR)} ,
\end{align*}
which vanish for constant functions.  
We note that $\|g\|_{\tilde H^k_\gamma(\bbR)}<\infty$ allows $g$ to have $O(|x|^{1-\gamma})$ growth at $\pm\infty$, and refer the reader to  \cite{ZlaMuskatLocal} for further discussion.  We now have the following local regularity result, 
%from \cite{ZlaMuskatLocal}, 
which holds on both the plane and the half-plane.
%, is from \cite{ZlaMuskatLocal}.

%Our reason for using $1-\gamma$ here is that for all $g\in L^2_{\rm loc}(\bbR)$ we now have
%\beq\lb{1.6a}
% \|g\|_{\tilde H^{k}_{\gamma'}(\bbR)}  \le \|g\|_{\tilde H^{k}_\gamma(\bbR)} 
%+ \|g'\|_{L^\infty(\bbR)}  \le 2  \|g\|_{\tilde H^{k}_\gamma(\bbR)}
%\eeq
%when $k\ge 1$ and $0\le\gamma'\le\gamma\le  1$, which agrees with a similar inequality for the norms $\|\cdot\|_{C^{k,\gamma}}$ (and will also be convenient  for the analogous continuous pseudo-norms from Section \ref{S2}).
%

\begin{theorem}[\hskip0.01mm \cite{ZlaMuskatLocal}] \lb{T.1.1X}
Let $\gamma\in(0,1]$ and $\|\psi\|_{\tilde H^3_\gamma(\bbR)}<\infty$.

(i)  
There is $\gamma$-independent $T_\psi\in(0,\infty]$  and a unique classical solution $f$ to \eqref{1.13} on $\bbR\times[0,T_\psi)$ with $f(\cdot,0)\equiv\psi$
such that 
\beq \lb{1.7}
\sup_{t\in[0,T]} \|f(\cdot,t)\|_{\tilde H^3_\gamma(\bbR)} <\infty
\eeq
for each $T\in(0,T_\psi)$.  It in fact satisfies
\beq \lb{1.7a}
\sup_{t\in[0,T]} \|f(\cdot,t)-\psi\|_{\tilde H^3(\bbR)} <\infty 
\qquad\text{and}\qquad \sup_{t\in[0,T]} \|f_t(\cdot,t)\|_{W^{1,\infty}(\bbR)}<\infty
\eeq
%and $f_x\in W^{1,\infty}(\bbR\times[0,T])$
for each $T\in(0,T_\psi)$, and if there are $\psi_{\pm\infty}\in\bbR$ such that $\psi-\psi_{\pm\infty}\in H^3(\bbR^\pm)$, then even 
\beq \lb{1.11}
\sup_{t\in[0,T]}  \|f(\cdot,t)-\psi\|_{H^3(\bbR)}  <\infty.
\eeq
%We in fact have
%\beq \lb{1.9}
%\sup_{t\in[0,T]} \|f(\cdot,t)-\psi\|_{\tilde H^3(\bbR)} <\infty.
%\eeq
Finally, if $T_\psi<\infty$, then for each $\gamma'\in(0,1]$ we have
% we have 
\beq \lb{1.8}
\int_0^{T_\psi} \|f_x(\cdot,t)\|_{C^{1,\gamma'}(\bbR)}^4 \, dt =\infty.
\eeq

(ii)
If $\psi\ge 0$, then (i) holds with $f\ge 0$ solving  \eqref{1.5}  instead.  And if $\inf \psi=0$, then $\inf f(\cdot,t)=0$ for each $t\in[0,T_\psi)$.
\end{theorem}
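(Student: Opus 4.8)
\smallskip
\noindent\textbf{Proof proposal.}
The plan is to treat \eqref{1.13} and its half-plane counterpart \eqref{1.5} as quasilinear equations that, in the stable regime, are parabolic of order one: linearizing around a graph $f$ produces at top order a term comparable to $-a(x,t)\,(-\partial_x^2)^{1/2}$ applied to the highest derivative of the perturbation, with coefficient $a$ bounded below by a positive multiple of $(1+\|f_x\|_\infty^2)^{-1}$. Part (i) then follows the by-now-standard scheme for such equations (as in \cite{CorGanMuskat} on $\bbR^2$), adapted to the nonstandard spaces $\tilde H^3_\gamma$: (a) derive an a priori bound for $\frac{d}{dt}\|f(\cdot,t)\|^2_{\tilde H^3_\gamma}$ in which the order-$\frac72$ dissipation absorbs all genuinely top-order error terms while the remainder is $\le C(\|f\|_{\tilde H^3_\gamma})$; (b) solve a regularized problem (artificial viscosity $\eps\partial_x^2 f$, or a mollified kernel), which is semilinear parabolic and globally well-posed for fixed $\eps$, obtain $\eps$-uniform bounds from (a), and pass to the limit $\eps\to 0$; (c) prove uniqueness and continuous dependence by running (a) on the difference of two solutions, which loses at most half a derivative and is thus absorbed by the dissipation. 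Since $\gamma$ enters only through the lower-order seminorm, the existence time $T_\psi$ comes out $\gamma$-independent.

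Two features of the spaces require care in (a). First, the operator in \eqref{1.13} is ``almost local'': once one derivative is moved onto $f$, the contribution to $f_t$ near a point $x$ from the part of the interface outside $[x-R,x+R]$ is lower order and of size $O(R^{-1})$ times low norms, so the local norm $\|f-\psi\|_{\tilde H^3}$ on the windows $[x-1,x+1]$ is self-improving and only weakly coupled to the seminorm $\|f_x\|_{\ddot C^{1-\gamma}}$ that allows the $O(|x|^{1-\gamma})$ growth; this yields \eqref{1.7} and the first half of \eqref{1.7a}. Second, the bound on $\|f_t\|_{W^{1,\infty}}$ is read off directly from \eqref{1.13}: with one derivative on $f$ the kernel is integrable and $\|f\|_{\tilde H^3_\gamma}$ controls the resulting expression. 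When in addition $\psi-\psi_{\pm\infty}\in H^3(\bbR^\pm)$, the same estimates run with $f-\psi$ measured in the genuine global $H^3$ norm, because the decay at $\pm\infty$ is propagated by the equation; this gives \eqref{1.11}.

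For the continuation criterion \eqref{1.8} I would sharpen the top-order estimate so that only $\|f_x(\cdot,t)\|_{C^{1,\gamma'}}$ enters the coefficient of the dissipation, with the error terms bounded by a power of $\|f_x(\cdot,t)\|_{C^{1,\gamma'}}$ times the energy $\|f(\cdot,t)\|^2_{\tilde H^3_\gamma}$ after interpolating all intermediate norms between $\tilde H^3_\gamma$ and $C^{1,\gamma'}$; balancing the errors against the dissipation by Young's inequality produces the exponent $4$ (this matches the scaling bookkeeping of the $H^3$ theory on $\bbR^2$). A Gronwall argument then shows $\|f(\cdot,t)\|_{\tilde H^3_\gamma}$ cannot blow up while $\int_0^t\|f_x(\cdot,s)\|_{C^{1,\gamma'}}^4\,ds$ stays finite, which, combined with (a)--(b), lets one continue the solution past such a time --- so $T_\psi<\infty$ forces \eqref{1.8}.

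I expect the genuinely new difficulties to lie in part (ii), and to be the main obstacle. The reflection kernel in \eqref{1.5} is harmless when the interface stays uniformly above $\bbR\times\{0\}$, but --- as the introduction notes --- the available estimates degenerate as $f\to 0$, so the a priori bound (a) must be redone with a kernel that is genuinely singular at the boundary, and showing that the $\tilde H^3_\gamma$-energy estimate still closes uniformly down to $f\equiv 0$ is the technical heart of the whole argument. Granting that, positivity and the ``stays on the bottom'' statement follow from the structure of \eqref{1.5}: at a point $x_0$ where $f(\cdot,t)$ attains its infimum one has $f_x(x_0,t)=0$, and then the two integrands in \eqref{1.5} combine to
\[
f_t(x_0,t)=PV\int_\bbR \frac{-4\,y\,f(x_0,t)\,f(x_0-y,t)\,f_x(x_0-y,t)}{\bigl(y^2+(f(x_0,t)-f(x_0-y,t))^2\bigr)\bigl(y^2+(f(x_0,t)+f(x_0-y,t))^2\bigr)}\,dy,
\]
which vanishes when $f(x_0,t)=0$. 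Upgrading this by an approximation argument (the infimum need not be attained or differentiable) shows both that $f\ge 0$ is preserved and that $t\mapsto\inf f(\cdot,t)$ cannot leave the value $0$; hence $\inf f(\cdot,t)=0$ for all $t\in[0,T_\psi)$ whenever $\inf\psi=0$.
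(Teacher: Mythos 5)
A structural point first: the present paper does not prove Theorem \ref{T.1.1X} at all --- it is quoted from the companion paper \cite{ZlaMuskatLocal} --- so there is no internal proof to compare yours against, and your proposal must stand as a self-contained argument. As such, it is an outline of the standard quasilinear scheme (regularize, close a $\tilde H^3_\gamma$ energy estimate using the half-order dissipation, pass to the limit, estimate differences for uniqueness, Gronwall for the continuation criterion), which is indeed the right general framework and essentially the route of \cite{CorGanMuskat,CGS} on the plane. But it is a plan rather than a proof, and the one step you explicitly ``grant'' --- closing the energy estimate for \eqref{1.5} uniformly as the interface approaches the bottom --- is precisely the content that goes beyond the known theory. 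The introduction of this paper notes that the $H^3$ argument of \cite{CorGanMuskat} extends to the half-plane only when the interface stays uniformly away from $\bbR\times\{0\}$, with constants blowing up as $f\to 0$, and that the contact case requires the ``much more involved'' proof of \cite{ZlaMuskatLocal}; moreover the reflected part of the kernel (the $+$ term in \eqref{1.6}) is discontinuous across $\partial\{f(\cdot,t)=0\}$, so the top-order commutator and dissipation estimates cannot simply be ``redone by analogy.'' Deferring exactly this point leaves part (ii), and hence the theorem, unproved.

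Two secondary gaps. For the claim $\inf f(\cdot,t)=0$, your algebra at a minimum point is correct (the two kernels do combine to a factor $-4f(x_0,t)f(x_0-y,t)$, and the resulting integrand is integrable even for interfaces with $O(|y|^{1-\gamma})$ growth), but in the $\tilde H^3_\gamma$ class the infimum need not be attained, and converting the pointwise identity into a rigorous Gronwall-type bound of the form $\tfrac{d}{dt}\inf f(\cdot,t)\le C\inf f(\cdot,t)$ (together with the corresponding argument preserving $f\ge 0$) requires the approximation step you only name; note also that $f\ge 0$ is needed before the sign structure you use is available, so the order of these arguments matters. Similarly, the exponent $4$ in \eqref{1.8}, the $\gamma$-independence of $T_\psi$, and the propagation of $H^3$ decay in \eqref{1.11} are asserted by scaling bookkeeping and analogy rather than derived. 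These are plausibly fixable, but together with the main issue above the proposal does not establish the theorem.
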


\vskip 3mm
\noindent
{\bf Main results.} 
We are now ready to state our main result, which shows that for the Muskat problem on the half-plane, stable regime finite time singularity develops from all initial data that touch the bottom and do not have large slopes (plus they are either periodic or $H^3(\bbR)$ perturbations of non-zero constant functions).  In it we also use the seminorm
\[
 \|g\|_{\dot C^\gamma(\bbR)}  := \sup_{x\neq y} \frac{|g(x)-g(y)|}{|x-y|^\gamma} .
\]
%\beq\lb{2.2X}
%\left| PV \int_{S\cup(-S)} \frac{y}{y^2+(f(x)\pm f(x-y))^2} \, dy \right| \le C \|f\|_{C^2_\gamma} .
%\eeq
%for any $S\subseteq[0,\infty)$.
%\beq \lb{2.5X}
% \left|  PV \int_{\bbR} \frac{y\, ( g'(x)\pm g'(x-y))}{y^2+(f(x)\pm f(x-y))^2} \, dy \right| \le C (1+ \|f'\|_{L^\infty}) \|g\|_{C^{1,\gamma}_\gamma} + C \|f\|_{C^2_\gamma}  \|g'\|_{L^\infty}.
%\eeq
%In particular, 
%\beq \lb{2.6X}
% \left|  PV \int_{\bbR} \frac{y\, ( f'(x)\pm f'(x-y))}{y^2+(f(x)\pm f(x-y))^2} \, dy \right| \le C (1+ \|f'\|_{L^\infty}) \|f\|_{C^2_\gamma} .
%\eeq
  
%\beq \lb{2.7X}
%|f'(x)| \le 2 \|f''\|_{L^\infty}^{1/2} \sqrt{f(x)} \le (1+\|f''\|_{L^\infty}) \sqrt{f(x)},
%\eeq
%\beq \lb{2.8X}
%\begin{split}
%|f'(x-y)| &\le 2(1+ \|f'\|_{C^1}) \max\left\{ y, \sqrt{ f(x)} \right\},
%\\  f(x-y) &\le 2(1+ \|f'\|_{C^1}) \max\left\{ y^2, f(x) \right\}
%\end{split}
%\eeq
%for all $x,y\in\bbR$ when $f\ge 0$.  

\begin{theorem} \lb{T.1.1}
If $0 \le  \psi\in \tilde H^3(\bbR)$ is periodic, $\min \psi=0\not \equiv \psi$, and $\|\psi'\|_{L^\infty(\bbR)}\le \frac 3{10}$, then $T_\psi<\infty$ and $f$ from Theorem \ref{T.1.1}(ii) satisfies 
%\[
%\sup_{t\in[0,T_\psi)}  \|f_x(\cdot,t)\|_{L^\infty(\bbR)}\le \frac 3{10}
%\]
\beq \lb{1.8a}
\|f_x\|_{L^\infty(\bbR\times[0,T_\psi))}\le \frac 3{10} \qquad\text{and}\qquad
\int_0^{T_\psi} \|f_{xx}(\cdot,t)\|_{\dot C^{\gamma'}(\bbR)}^4 \, dt =\infty
\eeq
for each $\gamma'\in(0,1]$.  The same result holds when periodicity of $\psi$ is replaced by $\psi-\psi_\infty\in  H^3(\bbR)$ for some constant $\psi_\infty\in(0,\infty)$.
\end{theorem}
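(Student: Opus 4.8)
\smallskip
\noindent\emph{Sketch of the argument.}
The plan is to combine three ingredients: the local well-posedness theory of Theorem~\ref{T.1.1X}; the slope maximum principle of Theorem~\ref{T.1.2}(ii); and the quantitative potential-energy maximum principle of Theorem~\ref{T.1.3}. Since $0\le\psi$ lies in the hypothesis class of Theorem~\ref{T.1.1X} (it is periodic and in $\tilde H^3(\bbR)$, or $\psi-\psi_\infty\in H^3(\bbR)$), part~(ii) produces the solution $f\ge0$ of \eqref{1.5} on $\bbR\times[0,T_\psi)$, and $\min\psi=0$ forces $\inf f(\cdot,t)=0$ for every $t\in[0,T_\psi)$ --- the interface touches the bottom for all time up to blow-up. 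By uniqueness $f(\cdot,t)$ remains periodic with the same period $L$ in the first case, while in the second $f(\cdot,t)-\psi_\infty\in H^3(\bbR)$ by \eqref{1.11}; hence the fluid mass $M:=\int f(\cdot,t)\,dx$ over one period (resp. $\int_\bbR(f(\cdot,t)-\psi_\infty)\,dx$) is conserved in time. Since $\|\psi'\|_{L^\infty(\bbR)}\le\tfrac3{10}$, Theorem~\ref{T.1.2}(ii) yields $\|f_x(\cdot,t)\|_{L^\infty(\bbR)}\le\tfrac3{10}$ on $[0,T_\psi)$, which is exactly the first bound in \eqref{1.8a}. Finally, with $\calE(t):=\tfrac12\int(f(\cdot,t))^2\,dx$ over one period (resp. $\calE(t):=\tfrac12\|f(\cdot,t)-\psi_\infty\|_{L^2(\bbR)}^2$, a finite renormalization of the interface potential energy), Theorem~\ref{T.1.3} says that $\calE$ is non-increasing, and in fact $\calE'(t)\le-c_0<0$ unless $f(\cdot,t)$ is, outside a set of small measure, $L^2$-close to some constant.

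The heart of the proof is that this exceptional alternative never occurs along our solution. Fix $t\in[0,T_\psi)$ and choose $x_0$ with $f(x_0,t)=0$; then $\|f_x(\cdot,t)\|_{L^\infty(\bbR)}\le\tfrac3{10}$ gives $0\le f(x,t)\le\tfrac3{10}|x-x_0|$, so $f(\cdot,t)$ lies below $\tfrac12\bar f$ --- hence is displaced by $\gtrsim\bar f$ from its mean $\bar f:=M/L>0$ (resp. from $\psi_\infty$) --- on an interval about $x_0$ of length of order $\bar f$: a displacement far too large, on a set far too big, to be hidden in a small exceptional set. Using mass conservation to keep any candidate near-constant value close to $\bar f$, one then contradicts closeness-to-flat once the smallness parameters in Theorem~\ref{T.1.3} are fixed small in terms of $\psi$ alone; moreover $c_0$ is uniform in $t$ because the relevant a priori quantities $\|f(\cdot,t)\|_{L^\infty(\bbR)}$ and $\|f_x(\cdot,t)\|_{L^\infty(\bbR)}$ are uniformly bounded along the trajectory (the former by $\tfrac{3L}{20}$ in the periodic case, and otherwise by interpolating $\|f_x(\cdot,t)\|_{L^\infty}\le\tfrac3{10}$ with $\|f(\cdot,t)-\psi_\infty\|_{L^2(\bbR)}\le\|\psi-\psi_\infty\|_{L^2(\bbR)}$, the latter from monotonicity of $\calE$). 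Thus $\calE'(t)\le-c_0$ on all of $[0,T_\psi)$, while $\calE$ is bounded below by a positive constant depending only on $\psi$ (Cauchy--Schwarz and mass conservation in the periodic case; the forced displacement $f-\psi_\infty=-\psi_\infty$ at $x_0$ together with the Lipschitz bound otherwise). If $T_\psi=\infty$ we could let $t\to\infty$ and obtain $\calE(t)\le\calE(0)-c_0t\to-\infty$, which is absurd; therefore $T_\psi<\infty$.

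It remains to upgrade the blow-up statement \eqref{1.8} of Theorem~\ref{T.1.1X} to the second assertion of \eqref{1.8a}. Since $\|f_x(\cdot,t)\|_{L^\infty(\bbR)}\le\tfrac3{10}$ on $[0,T_\psi)$, the standard interpolation inequality
\[
\|g'\|_{L^\infty(\bbR)}\le C_{\gamma'}\,\|g\|_{L^\infty(\bbR)}^{\gamma'/(1+\gamma')}\,\|g'\|_{\dot C^{\gamma'}(\bbR)}^{1/(1+\gamma')}
\]
applied with $g=f_x(\cdot,t)$ gives $\|f_{xx}(\cdot,t)\|_{L^\infty(\bbR)}\le C_{\gamma'}'\bigl(1+\|f_{xx}(\cdot,t)\|_{\dot C^{\gamma'}(\bbR)}\bigr)$, hence $\|f_x(\cdot,t)\|_{C^{1,\gamma'}(\bbR)}\le C_{\gamma'}''\bigl(1+\|f_{xx}(\cdot,t)\|_{\dot C^{\gamma'}(\bbR)}\bigr)$. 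Inserting this into \eqref{1.8} and using $T_\psi<\infty$ to absorb the bounded contribution forces $\int_0^{T_\psi}\|f_{xx}(\cdot,t)\|_{\dot C^{\gamma'}(\bbR)}^4\,dt=\infty$ for every $\gamma'\in(0,1]$. The $H^3(\bbR)$-perturbation case runs identically, with integrals of $f-\psi_\infty$ over $\bbR$ in place of per-period integrals and with \eqref{1.11} supplying the regularity and decay at $\pm\infty$ that the above uses.

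I expect the main obstacle to be the dovetailing in the second step: extracting from $\inf f(\cdot,t)=0$ together with the Lipschitz bound a quantitative ``distance from flat'' that exactly meets the hypothesis of Theorem~\ref{T.1.3}, and verifying that the resulting energy decay rate $c_0$ is genuinely uniform over the entire trajectory. (The substantive work, of course, lies in the proofs of the slope principle Theorem~\ref{T.1.2}(ii) and the $L^2$ principle Theorem~\ref{T.1.3} themselves, which are carried out separately.)
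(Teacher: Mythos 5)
Your proposal is correct and follows essentially the same route as the paper: Theorem \ref{T.1.2}(ii) for the persistent slope bound, the persistence of $\inf f(\cdot,t)=0$ and mass conservation from the local theory, a uniform-in-time lower bound on the dissipation in Theorem \ref{T.1.3} via exactly the touching-point-plus-Lipschitz versus mass-conservation dichotomy (the paper makes this explicit by exhibiting sets $A_t$, $B_t$ where $f\le L/3$ and $f\ge 2L/3$ and bounding the $\lambda$-integral below by $L^6/20$), and then the interpolation of $\|f_{xx}\|_{L^\infty}$ between $\|f_x\|_{L^\infty}$ and $\|f_{xx}\|_{\dot C^{\gamma'}}$ to upgrade \eqref{1.8} to the second claim in \eqref{1.8a}. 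The only cosmetic difference is that you phrase Theorem \ref{T.1.3} as a qualitative ``decay unless nearly flat'' statement with smallness parameters, whereas the paper works directly with the explicit $\lambda$-integral; your geometric argument supplies precisely what that computation needs, so nothing is missing.
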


{\it Remarks.}
1.  Since $\psi\in \tilde H^3(\bbR)$, we have $\|\psi\|_{\tilde H^3_\gamma(\bbR)}<\infty$ for all $\gamma\in[0,1]$.  
This and the first claim in \eqref{1.7a} show that  we do not need to specify $\gamma$ in Theorem \ref{T.1.1}.
\smallskip

2.  So  while remaining a Lipschitz graph, the interface leaves $C^{2,\gamma'}(\bbR)$ for each $\gamma'>0$ (and hence also $H^3_{\rm loc}(\bbR)$) at time $T_\psi<\infty$ when, for instance, for some (arbitrarily) small $\eps>0$ we have $\|\psi-\eps\|_{H^3(\bbR)}\le C \eps$.  And this can happen after an arbitrarily long time because in \cite{ZlaMuskatLocal} we showed that
$T_\psi \ge C_\gamma (1 - \ln \|\psi\|_{\tilde H^3_\gamma(\bbR)})$ for some $C_\gamma>0$.
\smallskip

As we explained above, the main ingredient in the proof of Theorem \ref{T.1.1} is the following $L^\infty$ maximum principle for $f_x$ on the half-plane.  Since we prove it in the general case $\|\psi\|_{\tilde H^3_\gamma (\bbR)}<\infty$, the result is new even on the plane and we state it here on both domains.

\begin{theorem} \lb{T.1.2}
(i)
Let $\gamma,\psi,f$ be as in Theorem \ref{T.1.1X}(i). If $\|f_x(\cdot,t')\|_{L^\infty(\bbR)} \le 1$ for some $t'\in[0,T_\psi)$, then $\|f_x(\cdot,t)\|_{L^\infty(\bbR)}$ is decreasing on $[t',T_\psi)$ (strictly unless it is 0).

(ii)
If $\gamma,\psi,f$ are as in Theorem \ref{T.1.1X}(ii), then (i) holds with $\|f_x(\cdot,t')\|_{L^\infty(\bbR)} \le \frac 3{10}$ instead. 
%in place of $\|f_x(\cdot,t')\|_{L^\infty(\bbR)} \le 1$.
%for some $t'\in[0,T_\psi)$, then $\|f_x(\cdot,t)\|_{L^\infty(\bbR)}$ is non-increasing on $[t',\infty)$.
\end{theorem}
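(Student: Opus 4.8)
The plan is to establish the differential inequality $\tfrac{d}{dt}\|f_x(\cdot,t)\|_{L^\infty(\bbR)}\le 0$, with strict inequality as long as the norm is positive, from which the theorem follows. Since \eqref{1.7a} bounds $f_t$ in $W^{1,\infty}(\bbR)$ uniformly on compact time intervals, hence $f_{tx}$ in $L^\infty$, the map $t\mapsto M(t):=\|f_x(\cdot,t)\|_{L^\infty(\bbR)}$ is locally Lipschitz, so it suffices to control its a.e.\ derivative. Fix such a $t$ with $M(t)>0$, take $x_n$ with $|f_x(x_n,t)|\to M(t)$, and — replacing $f$ by the solution $f(-x,t)$ if needed (this reflection preserves \eqref{1.5} and the constraint $f\ge0$) — assume $f_x(x_n,t)\to M(t)$. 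By the uniform bounds of Theorem \ref{T.1.1X} together with Sobolev embedding on unit intervals (which makes $f$ uniformly Lipschitz and locally $C^2$-bounded) and the equicontinuity in $t$ from $f_t\in W^{1,\infty}$, a subsequence of the horizontal translates $f(\cdot+x_n,\cdot)$ converges in $C^2_\loc$, uniformly on compact time intervals, to a solution $\bar f$ of \eqref{1.5}; if instead $f(x_n,t)\to\infty$ so that the image kernel in \eqref{1.5} degenerates, the limit solves the whole-plane equation \eqref{1.13}, a case covered by the argument below since $\tfrac{3}{10}\le 1$. One has $\|\bar f_x(\cdot,s)\|_{L^\infty}\le M(s)$ for all $s$, with equality at $s=t$, where moreover $x=0$ is a maximum of $\bar f_x(\cdot,t)$ with $\bar f_x(0,t)=M(t)$; hence $s\mapsto M(s)-\bar f_x(0,s)$ is nonnegative and vanishes at $s=t$, so $\tfrac{d}{dt}M(t)=\partial_t\bar f_x(0,t)$. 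Everything thus reduces to showing that if $g:=f_x$ attains its spatial supremum $M:=\|f_x(\cdot,t)\|_{L^\infty}>0$ at a point $x_t$ (so $f_{xx}(x_t)=0$), then $\partial_t g(x_t,t)<0$.

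I would then differentiate \eqref{1.5} in $x$: the right-hand side becomes the sum of a term linear in $f_{xx}(x)\mp f_{xx}(x-y)$ and a term quadratic in $\Delta g:=g(x)-g(x-y)$ and $\Sigma g:=g(x)+g(x-y)$. Evaluating at $x_t$, using $f_{xx}(x_t)=0$, and integrating by parts in $y$ to replace the sign-indefinite factor $f_{xx}(x_t-y)$ by $\partial_y$ of the kernels (the boundary terms at $y=\pm\infty$ vanish because $|\Delta f|,|\Sigma f|=O(|y|^{1-\gamma})$ while $\Delta g,\Sigma g$ stay bounded), a clean cancellation between the cross-term produced by the integration by parts and the quadratic term leaves
\[
\partial_t g(x_t,t)=\int_\bbR\frac{(\Delta f-yM)^2-(1+M^2)y^2}{(y^2+(\Delta f)^2)^2}\,\Delta g\,dy
+\int_\bbR\frac{(\Sigma f-yM)^2-(1+M^2)y^2}{(y^2+(\Sigma f)^2)^2}\,\Sigma g\,dy,
\]
where $\Delta f=f(x_t)-f(x_t-y)$, $\Sigma f=f(x_t)+f(x_t-y)\ge0$, $\Delta g=M-f_x(x_t-y)\ge0$, $\Sigma g=M+f_x(x_t-y)\ge0$ (the last inequality uses $\|f_x\|_{L^\infty}=M$), the second integral is present only on the half-plane, and $|\Delta f|\le M|y|$. (As a consistency check, linearizing around $f\equiv c$ recovers the decay rates $-\pi|k|$ and $-\pi|k|(1-e^{-2c|k|})$ of a frequency-$k$ mode on the plane and half-plane.)

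The final and hardest step is the sign analysis. Since $\Delta g,\Sigma g\ge0$, the first (resp.\ second) integrand has the sign of $(\Delta f-yM)^2-(1+M^2)y^2$ (resp.\ of the same expression with $\Sigma f$), which is $\le0$ exactly when $\Delta f/y$ (resp.\ $\Sigma f/y$) lies in $[M-\sqrt{1+M^2},M+\sqrt{1+M^2}]$. For the first integral $|\Delta f/y|\le M$, so its kernel is pointwise $\le0$ whenever $M\le 1/\sqrt3$; this immediately handles the first integral on the half-plane (where $M\le\tfrac{3}{10}<1/\sqrt3$), while on the plane, reaching the sharp bound $M\le1$ requires instead dominating the positive contribution of the steeply descending set $\{\Delta f/y<M-\sqrt{1+M^2}\}$ by the strictly negative dispersive part — the value $1$ being precisely where this can still be done (cf.\ \cite{CorGanMuskatMax}). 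The genuinely new difficulty, and the bulk of the paper, is the second integral: because $\Sigma f/y$ is not controlled by the slope, its kernel cannot be made pointwise nonpositive, and when $f(x_t)$ is small this integral develops large positive and large negative contributions concentrated at scale $|y|\sim f(x_t)$ (the linearized computation already exhibits a near-cancellation, of size $\sim\pi|k|$ each, as $c\to0$); one must quantify this cancellation — against itself and against the first integral, uniformly all the way down to interfaces touching $\bbR\times\{0\}$ — and it is this estimate that fixes the threshold $\tfrac{3}{10}$. Finally, strict negativity when $M>0$: $f_x$ is then nonconstant (constants of nonzero slope lie outside $\tilde H^3$ and $\tilde H^3_\gamma$), so $\Delta g>0$ on a set where the first kernel is strictly negative (e.g.\ for small $y\neq0$, where $\Delta f/y$ is near $M$, strictly inside the good interval), whence $\partial_t g(x_t,t)<0$; by the first paragraph, $\|f_x(\cdot,t)\|_{L^\infty}$ is strictly decreasing while positive. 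The main obstacle is, as noted, controlling the second integral near the bottom.
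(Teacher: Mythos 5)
Your reduction is set up sensibly and your differentiated identity is in fact correct: the cross terms produced by the integration by parts do cancel against the quadratic term, leaving the kernels $\bigl(\Delta f^2-2My\,\Delta f-y^2\bigr)/(y^2+\Delta f^2)^2$ and its analogue with $\Sigma f$, multiplied by $\Delta g,\Sigma g\ge 0$. But two things separate this from a proof. First, even for part (i) your decomposition is weaker than needed: keeping $\Delta g$ as a factor makes the first kernel pointwise nonpositive only for $M\le 1/\sqrt3$, and for $M\in(1/\sqrt3,1]$ you merely gesture at \cite{CorGanMuskatMax}. The paper instead factors out $f'(x)$ and works with $h_a(y,r)=\bigl(y^2-r^2-(a^{-1}-a)yr\bigr)/(y^2+r^2)^2$, whose numerator is pointwise nonnegative for all $M\le 1$ under $|r|\le M|y|$ (see \eqref{22.29}), so no cancellation argument is needed there; moreover the paper deliberately avoids your compactness/translation reduction (which on the half-plane requires justifying passage to the limit in the nonlocal equation, the degeneration to \eqref{1.13} when heights escape, and strictness via the inherited $\ddot C^{1-\gamma}$ bound) by estimating $\partial_t f_x$ at \emph{all} points with $|f'(x)|\ge M_t-\delta$ and letting $\delta\to0$, which is what also makes part (ii) tractable.

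Second, and decisively: for part (ii) you state the difficulty — the $\Sigma f$ kernel cannot be made pointwise nonpositive and a cancellation must be quantified uniformly down to interfaces touching the bottom, and "it is this estimate that fixes the threshold $\tfrac3{10}$" — but you provide no argument for it. That estimate \emph{is} the theorem. In the paper it consists of: the bound \eqref{22.38} showing that near-maximal-slope points sit at height at least $M_t^2/(16\|f''\|_{L^\infty})$ above the bottom; the reduction of the needed sign to the scaling-invariant variational statement \eqref{22.40}/\eqref{22.50}; Lemma \ref{L.2.1}, proving $H(f)=\sum_\pm\int h_a(y,f(0)\pm f(y))\,dy\ge 0$ for all admissible $f$ with slope bound $a\le\tfrac3{10}$, with the tent $a|y-a^{-1}f(0)|$ as the unique case of equality; a compactness argument upgrading this to a strictly positive infimum over $\calF_{a,c}$; and Lemma \ref{L.2.2} with the full minimizer analysis of Section \ref{S3}. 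None of this appears in your proposal, so what you have is a correct setup and an accurate description of the obstruction, not a proof of Theorem \ref{T.1.2}(ii), and part (i) above slope $1/\sqrt3$ is also left to an external reference in a setting (non-decaying $\tilde H^3_\gamma$ interfaces, possibly unattained suprema) where it would have to be reproved.
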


Finally, we state the $L^2$ maximum principle for $f$ (which is new on the half-plane) that we also use in the proof of Theorem~\ref{T.1.1}.
In it we let
\[
\lambda(a,b,c) := \frac {(a+ b)^2 (a-b)^2} { 2 [c^2+(a+b)^2] \, [c^2+(a-b)^2]} \ge 0.
\]

\begin{theorem} \lb{T.1.3}
Let $0 \le  \psi\in \tilde H^3(\bbR)$ and let $f$ be from Theorem \ref{T.1.1}(ii).

(i)
If $\psi$ is periodic with period $\nu$, then 
\beq\lb{1.9}
 \frac d{dt}\|f(\cdot,t)\|_{L^2([0,\nu])}^2 = -2\|\rho_{x_1}(\cdot,t)\|_{\dot H^{-1} ([0,\nu]\times\bbR^+)}^2 
 \le -    \int_{[0,\nu]\times\bbR} \lambda \big( f(x,t),f(y,t),x-y \big) \, dxdy
\eeq
 for each $t\in[0,T_\psi)$, where $\rho(\cdot,t):=2\pi \chi_{\{0<x_2<f(x_1,t)\}}$.

(ii)
If  $\psi-\psi_\infty\in  H^3(\bbR)$ for some $\psi_\infty\in(0,\infty)$, then  $\tilde f:=f-\psi_\infty$ satisfies
\beq\lb{1.9a}
 \frac d{dt}\|\tilde f(\cdot,t)\|_{L^2(\bbR)}^2 = -2\|\rho_{x_1}(\cdot,t)\|_{\dot H^{-1} (\bbR\times\bbR^+)}^2 
 \le -    \int_{\bbR^2} \lambda \big( f(x,t),f(y,t),x-y \big) \, dxdy
\eeq
for each $t\in[0,T_\psi)$, where $\rho(\cdot,t):=2\pi \chi_{\{0<x_2<f(x_1,t)\}}$.
\end{theorem}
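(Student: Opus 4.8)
The plan is to prove the equalities in \eqref{1.9} and \eqref{1.9a} and, separately, the inequalities that follow them; parts (i) and (ii) can be handled in parallel, the only differences being periodic-versus-decaying bookkeeping of boundary terms and, in (ii), the subtraction of the reference state $\{0<x_2<\psi_\infty\}$ (which is why $\tilde f$ rather than $f$ appears). There are two ingredients: a \emph{dynamical} energy-dissipation identity $\frac{d}{dt}\|f\|_{L^2}^2 = -2\|\rho_{x_1}\|_{\dot H^{-1}}^2$, and a \emph{static} lower bound $2\|\rho_{x_1}\|_{\dot H^{-1}}^2 \ge \int\!\!\int\lambda(f(x),f(y),x-y)\,dx\,dy$ for the configuration $\rho = 2\pi\chi_{\{0<x_2<f(x_1)\}}$ with $f\ge 0$.

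For the identity I would first observe that $\|f(\cdot,t)\|_{L^2([0,\nu])}^2$ in (i) (resp.\ $\|\tilde f(\cdot,t)\|_{L^2(\bbR)}^2$ in (ii)) is, up to the fixed factor coming from $\rho_1-\rho_2 = 2\pi$, the potential energy $\int\rho\,x_2\,d{\bf x}$ of the configuration (over one period in case (i)). Since $\rho$ solves \eqref{1.1} with $\nabla\cdot u = 0$, we have $\rho_t = -\nabla\cdot(u\rho)$, so multiplying by $x_2$ and integrating by parts gives $\frac{d}{dt}\int\rho\,x_2\,d{\bf x} = \int u_2\,\rho\,d{\bf x}$ — the boundary term on $\{x_2=0\}$ vanishing because $u_2\equiv 0$ there, and the lateral and $x_2\to\infty$ terms by periodicity/decay. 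By \eqref{1.3} we have $u_2 = -\partial_{x_1}\Delta^{-1}\rho_{x_1}$, so one further integration by parts — the Dirichlet condition for $\Delta^{-1}$ killing the $\{x_2=0\}$ term — yields $\int u_2\,\rho\,d{\bf x} = \langle\rho_{x_1},\Delta^{-1}\rho_{x_1}\rangle = -\|\nabla\Delta^{-1}\rho_{x_1}\|_{L^2(\bbR\times\bbR^+)}^2 = -\|\rho_{x_1}\|_{\dot H^{-1}}^2$, which is the asserted identity after keeping track of the normalizations. To make this rigorous despite $\rho(\cdot,t)$ being merely a characteristic function, I would instead differentiate $\|f(\cdot,t)\|_{L^2}^2$ directly from \eqref{1.5}, for which Theorem \ref{T.1.1X} supplies the needed regularity ($f\in C([0,T];\tilde H^3_\gamma)$ and $f_t\in W^{1,\infty}$) to differentiate under the integral and to symmetrize in $x\leftrightarrow y$ (periodizing the kernel in case (i)). As the two kernels in \eqref{1.5} are respectively even and odd under $x\leftrightarrow y$, the symmetrized integrand becomes $(f(x)+f(y))(\partial_x+\partial_y)\arctan\tfrac{f(x)-f(y)}{x-y} + (f(x)-f(y))(\partial_x+\partial_y)\arctan\tfrac{f(x)+f(y)}{x-y}$, and one more integration by parts reduces it to $-\int\!\!\int\big[(f_x(x)+f_x(y))\arctan\tfrac{f(x)-f(y)}{x-y} + (f_x(x)-f_x(y))\arctan\tfrac{f(x)+f(y)}{x-y}\big]\,dx\,dy$, which equals $-2\|\rho_{x_1}\|_{\dot H^{-1}}^2$ by the Green's-function representation below.

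For the lower bound I would use the method of images: $\Delta^{-1}$ on $\bbR\times\bbR^+$ has Green's function $G({\bf x},{\bf y}) = \frac{1}{2\pi}\log\frac{|{\bf x}-{\bf y}|}{|{\bf x}-\bar{\bf y}|}$ with $\bar{\bf y} = (y_1,-y_2)$, and since $\rho_{x_1}(\cdot,t)$ is, up to the factor $2\pi$, the measure pairing a test function $h$ to $\int_\bbR f'(x)\,h(x,f(x))\,dx$, restricting $-\langle\rho_{x_1},\Delta^{-1}\rho_{x_1}\rangle$ to the interface gives
\[
\|\rho_{x_1}\|_{\dot H^{-1}}^2 \;=\; c_0 \int_{\bbR^2} f'(x) f'(y)\, \log\frac{(x-y)^2 + (f(x)+f(y))^2}{(x-y)^2 + (f(x)-f(y))^2}\, dx\, dy
\]
for an explicit constant $c_0>0$; note the logarithm is nonnegative, while $f'(x)f'(y)$ is sign-indefinite, so positivity is not visible here. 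It remains to compare this with $\int\!\!\int\lambda\,dx\,dy$, and the useful observation is the algebraic identity $\lambda(a,b,c) = \tfrac{1}{2}\sin^2\!\big(\arctan\tfrac{a-b}{c}\big)\sin^2\!\big(\arctan\tfrac{a+b}{c}\big)$ — i.e.\ $\lambda$ is half the product of the squared sines of the chord angles from $(x,f(x))$ to the true and to the reflected interface point. The inequality then reduces to an elementary one-variable estimate (with $x-y$, $f(x)$, $f(y)$ frozen) comparing the logarithmic kernel above to this product of $\sin^2$ factors, which I would reach after an integration by parts moving one derivative off $f$ followed by a reorganization of the double integral, again discarding boundary terms by periodicity/decay; the hypothesis $f\ge 0$ — the feature distinguishing the half-plane — enters here, through the reflected point.

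The step I expect to be the main obstacle is the lower bound: the $\dot H^{-1}$ norm intrinsically carries the sign-indefinite factors $f'(x)f'(y)$, whereas $\lambda$ is $f'$-free and manifestly nonnegative, so passing from one to the other demands the right rearrangement together with a genuine inequality rather than an exact identity — unlike the whole-plane version of \cite{CorGanMuskatMax, CCGS}, where no image term is present. The identity, by contrast, is the classical energy-dissipation structure and is complicated only by the low regularity of $\rho$ and by the need to check that every boundary contribution — on $\{x_2=0\}$, as $x_2\to\infty$, and as $x_1\to\pm\infty$ (or across a period) — genuinely vanishes, which the bounds of Theorem \ref{T.1.1X} are designed to provide.
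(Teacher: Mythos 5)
Your treatment of the equality $\frac d{dt}\|f\|_{L^2}^2=-2\|\rho_{x_1}\|_{\dot H^{-1}}^2$ is sound and is essentially what the paper does (the paper phrases it via the divergence theorem applied to $\int_{\partial\Omega_t}x_2\,u\cdot n$, with the reflected regions $\Omega_t^\pm$ in case (ii); your transport-equation version is the same computation). The genuine gap is in the inequality. You propose to prove a \emph{static} bound $2\|\rho_{x_1}\|_{\dot H^{-1}}^2\ge\iint\lambda$ from the Green's-function representation $\|\rho_{x_1}\|_{\dot H^{-1}}^2=c_0\iint f'(x)f'(y)\ln\frac{(x-y)^2+(f(x)+f(y))^2}{(x-y)^2+(f(x)-f(y))^2}\,dx\,dy$, asserting that after ``an integration by parts and a reorganization'' it ``reduces to an elementary one-variable estimate with $x-y$, $f(x)$, $f(y)$ frozen.'' That reduction cannot work as stated: with those quantities frozen the integrand still carries the sign-indefinite weight $f'(x)f'(y)$ while $\lambda$ is $f'$-free and positive, and integrating by parts to remove the derivatives does not produce a kernel depending only on the frozen quantities (the chain rule reintroduces $f'$, and jump/boundary terms appear). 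Your observation $\lambda(a,b,c)=\tfrac12\sin^2\big(\arctan\tfrac{a-b}{c}\big)\sin^2\big(\arctan\tfrac{a+b}{c}\big)$ is correct but does not by itself supply the comparison. In short, the step you yourself flag as ``the main obstacle'' is exactly the content of the theorem, and the proposal contains no argument for it.

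What is actually needed (and what the paper does) is not a pointwise comparison but an exact cancellation. Working directly with the evolution \eqref{1.5}, writing the kernels as $\frac d{dx}\arctan\frac{f(x)\pm f(x-y)}{y}$ and integrating by parts twice, one obtains
\[
 \frac d{dt}\|f\|_{L^2([0,1])}^2 = 4\pi\int_0^1 f(y)\,dy-\sum_\pm\int_{[0,1]\times\bbR}\ln\Big(1+\tfrac{(f(x)\pm f(y))^2}{(x-y)^2}\Big)\,dx\,dy,
\]
where the positive term $4\pi\int f$ comes from the jump of $\arctan\frac{f(x)+f(y)}{x-y}$ across $x=y$ (present only on the half-plane, since $f\ge 0$); your symmetrization sketch omits this contribution entirely. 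The inequality then follows by splitting off from the logarithms, via two elementary inequalities for $\ln(1+p)$, the exact amounts $\ln\big(1+\frac{4f(x)^2}{(x-y)^2}\big)+\ln\big(1+\frac{4f(y)^2}{(x-y)^2}\big)$, whose $y$-integrals equal $4\pi f(x)+4\pi f(y)$ by $\int_\bbR\ln(1+C^2/y^2)\,dy=2\pi C$ and therefore cancel the jump term exactly; the leftover dominates $\tilde\lambda\ge\lambda$. Because the dissipation contains this positive term that must be cancelled rather than dominated, no frozen-variable estimate of the $\dot H^{-1}$ kernel against $\lambda$ can close the argument; you would need either to reproduce this cancellation mechanism or to find a genuinely different proof of the static bound, which your proposal does not provide.
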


{\it Remark.}
The proof of Theorem \ref{T.1.3} shows that for \eqref{1.13} we instead have
\[
 \frac d{dt}\|\tilde f(\cdot,t)\|_{L^2(S)}^2 = - \int_{S\times\bbR}\ln \left(1+ \frac{(f(x,t)- f(y,t))^2}{(x-y)^2} \right) dxdy,
\]
where $S$ is $[0,\nu]$ in (i) (with $\psi_\infty:=0$) and it is $\bbR$ in (ii).  This is a result  from \cite{CCGS}.
\smallskip

The rest of the paper is composed of four sections.  We prove Theorem \ref{T.1.1} in Section \ref{S5}, using Theorems \ref{T.1.2} and \ref{T.1.3}.  The latter result is then proved in Section \ref{S4}, while the much longer proof of Theorem \ref{T.1.2} is split between Sections \ref{S2} and \ref{S3}.

%%%%%%%%%%%%%%%%%%%%%%%%%%%%%%%%%%%%%%%%%%%%%%
\section{Proof of Theorem \ref{T.1.1}} \lb{S5}
%Start of the Proof of Theorem \ref{T.1.1} and 
%%%%%%%%%%%%%%%%%%%%%%%%%%%%%%%%%%%%%%%%%%%%%%

The proofs of both claims are similar, so we only provide here the first one.  We note that the proof of the second claim involves  norms $\|\tilde f(\cdot,t)\|_{L^1(\bbR)}$ and $\|\tilde f(\cdot,t)\|_{L^2(\bbR)}$ with $\tilde f:=f-\psi_\infty$ instead of $\|f(\cdot,t)\|_{L^1([0,1])}$ and $\|f(\cdot,t)\|_{L^2([0,1])}$ (see \eqref{1.11} and the end of the next section).

Assume that the period of $\psi$ is 1 because the general case is identical.  By the uniqueness claim in Theorem \ref{T.1.1X}(i), $f(\cdot,t)$ also has period 1 for each $t\in[0,T_\psi)$.  

Theorem \ref{T.1.2}(ii) proves the first claim in \eqref{1.8a}, while \eqref{4.0a} below shows that 
\[
\|f(\cdot,t)\|_{L^1([0,1])} = \|\psi\|_{L^1([0,1])} =: L \in \left(0,\frac 3{40} \right)
\]
for all $t\in[0,T_\psi)$. 
The last claim in Theorem \ref{T.1.1X}(ii) and the first claim in \eqref{1.8a} now show that for each $t\in[0,T_\psi)$, there is $A_t\subseteq[0,1]$  that is an interval or a union of two intervals of total length $\ge 2L$ such that $f(\cdot,t)\le \frac L3$ on $A_t$.  These two claims also show that $\|f\|_{L^\infty(\bbR\times[0,T_\psi))}\le \frac 3{20}$, and so there is $B_t\subseteq[0,1]$  that is an interval or a union of two intervals of total length $\ge 2L$ such that $f(\cdot,t)\ge \frac {2L} 3$ on $B_t$.  But then the  integral in \eqref{1.9} is at least
\[
4L^2\frac{ \frac {4L^2}9\,\frac {L^2}9}{2 \left(1+(\frac {L}3+\frac 3{20})^2 \right) \left( 1+\frac 9{400} \right)} \ge \frac {L^6}{20}
\]
for each $t\in[0,T_\psi)$.  This and \eqref{1.9} now imply that $T_\psi<\infty$ because $\|f(\cdot,t)\|_{L^2([0,1])}^2\ge 0$.  If we then combine the first claim in \eqref{1.8a} with \eqref{1.8}, we obtain the second claim in \eqref{1.8a}.

So to complete the proof, it remains to prove Theorems \ref{T.1.2} and \ref{T.1.3}, as well as \eqref{4.0a}.

%%%%%%%%%%%%%%%%%%%%%%%%%%%%%%%%%%%%%%%%%%%%%%
\section{Proof of Theorem \ref{T.1.3}} \lb{S4}
%Start of the Proof of Theorem \ref{T.1.1} and 
%%%%%%%%%%%%%%%%%%%%%%%%%%%%%%%%%%%%%%%%%%%%%%

\vskip 3mm
\noindent
{\bf Proof of (i).}  
Assume  that the period $\nu=1$ because the general case is identical.
By the uniqueness claim in Theorem \ref{T.1.1X}(i), $f(\cdot,t)$ also has period 1 for each $t\in[0,T_\psi)$.
Since
\beq \lb{4.0}
 \frac{y\, ( f'(x)\pm f'(x-y))}{y^2+(f(x)\pm f(x-y))^2} 
=  \frac d{dx} \arctan  \frac{ f(x)\pm f(x-y)} y 
\eeq
for any $y\in\bbR$, we see that
\beq \lb{4.0a}
\frac d{dt}\|f(\cdot,t)\|_{L^1([0,1])}=0
\eeq
for  $t\in[0,T_\psi)$.  From \eqref{4.0} and integration by parts in $x$ 
%(which creates the ``boundary'' term $-\pi f(y)$ at $x=y$ when $\pm$ is $+$)
we obtain (dropping $t$ in the notation)
\begin{align*}
\frac 12 \frac d{dt}\|f\|_{L^2([0,1])}^2 & = - \sum_\pm PV \int_{[0,1]\times\bbR}  f'(x)  \arctan  \frac{ f(x)\pm f(x-y)} y \, dx dy
\\ & = - \sum_\pm   \int_{[0,1]^2} PV \sum_{n\in\bbZ} f'(x)  \arctan  \frac{ f(x)\pm f(x-y)} {y+n} \, dx dy
\\ & = -  \sum_\pm \int_{[0,1]^2} PV \sum_{n\in\bbZ} f'(x)  \arctan  \frac{ f(x)\pm f(y)} {x-y-n} \, dx dy
\\ & = - \sum_\pm PV \int_{[0,1]\times\bbR}  f'(x)  \arctan  \frac{ f(x)\pm f(y)} {x-y} \, dx dy.
\end{align*}
Let us write the integral in $x$ as
\beq \lb{4.1}
\int_{0}^{1} \left[ (x-y) \frac d{dx} G \left( \frac{ f(x)\pm f(y)} {x-y} \right) +   \frac{ f(x)\pm f(y)} {x-y} 
 \arctan  \frac{ f(x)\pm f(y)} {x-y} \right] dx,
\eeq
where
\[
G(s):=s\arctan s- \ln \sqrt{1+s^2} = \int_0^s \arctan r\,dr.
\]
%satisfies $G'(s)=\arctan s$.
We integrate the first term in \eqref{4.1} by parts in $x$ again 
%(this creates an additional ``boundary'' term $-2\pi f(0)$ at $x=y$ when $y\in(0,1)$ and $\pm$ is $+$) 
and find that \eqref{4.1} equals
\[
 \int_{0}^{1}  \frac 12 \ln \left(1+ \frac{(f(x)\pm f(y))^2}{(x-y)^2} \right)\, dx + (1-y)  G \left( \frac{ f(1)\pm f(y)} {1-y} \right) - (- y)  G \left( \frac{ f(0)\pm f(y)} {-y} \right)
\]
plus   the additional ``boundary'' term $-  2\pi f(y)$ when $y\in(0,1)$ and $\pm$ is $+$  (from the singularity at $x=y$).
The last two terms above cancel with the same terms when $y$ is increased and decreased by 1 because $f$ is 1-periodic, so we obtain
\beq\lb{4.2}
 \frac d{dt}\|f\|_{L^2}^2 
 %& = - \sum_\pm \lim_{n\to\infty} \frac 1{4n}  \int_{[-2n,2n]\times[-n,n]} \ln \left(1+ \frac{(f(x)\pm f(y))^2}{(x-y)^2} \right) \, dxdy
  = 4\pi \int_0^1 f(y)\, dy -  \sum_\pm \int_{[0,1]\times\bbR} \ln \left(1+ \frac{(f(x)\pm f(y))^2}{(x-y)^2} \right)  dx   dy.
\eeq

We now use the formulas
\[
\ln(1+p)+\ln(1+q)- \ln(1+p+q) \ge \int_1^{1+q} \left( \frac 1z - \frac 1{p+z}\right)dz \ge \frac {pq}{(1+q)(1+p+q)}
\]
with $p:=\frac{(f(x)+ f(y))^2}{(x-y)^2}$ and $q:= \frac{(f(x)- f(y))^2}{(x-y)^2}$, and then
\[
2\ln(1+p+q)-\ln(1+p+q+r)-\ln(1+p+q-r) \ge 0
\]
with $r:=\frac{2f(x)^2- 2f(y)^2}{(x-y)^2}$ to get
\[
2\sum_\pm \ln \left(1+ \frac{(f(x)\pm f(y))^2}{(x-y)^2} \right) \ge \ln \left(1+ \frac{4f(x)^2}{(x-y)^2} \right) +  \ln \left(1+ \frac{4f(y)^2}{(x-y)^2} \right) +2 \tilde \lambda(f(x),f(y),x-y),
\]
where
\beq\lb{4.3}
 \tilde \lambda(a,b,c):= \frac {(a+ b)^2 (a- b)^2} { [c^2+2a^2+2b^2] \, [c^2+(a-b)^2] } \ge 0.
\eeq
Since also
\[
\int_\bbR  \ln \left(1+ \frac{C^2}{y^2} \right) \,dy 
= \int_\bbR \frac d{dy} \left[ y\ln \left(1+ \frac{C^2}{y^2} \right) -2C\arctan \frac Cy \right] dy = 2\pi C
\]
and
\[
\int_{[0,1]^2} \sum_{n\in\bbZ} \ln \left(1+ \frac{4f(y+n)^2}{(x-(y+n))^2} \right) dxdy
 = \int_{[0,1]^2}  \sum_{n\in\bbZ} \ln \left(1+ \frac{4f(y)^2}{((x-n)-y)^2} \right) dxdy
\]
for $y\in(0,1)$ (by 1-periodicity of $f$),
this and \eqref{4.2} yield
\beq\lb{4.4}
 \frac d{dt}\|f\|_{L^2([0,1])}^2 \le -   \int_{[0,1]\times\bbR}   \tilde \lambda(f(x),f(y),x-y) \, dxdy.
 \eeq
Now \eqref{1.9} follows from $\tilde \lambda\ge \lambda$.

% \medskip
% 
% \noindent
% {\bf Second proof.}
 
Let $\Gamma_t:=\Gamma_f(t)$ and
\[
\Omega_t':= \big\{ {\bf x}\in(0,1)\times(0,\infty) \,\big|\, x_2< f(x_1,t) \big\}.
\]
 We have
 \[
 \int_0^1 f(x) \partial_t f(x) dx = \int_{\Gamma_t} \frac {x_2\, \partial_t f(x_1)}{\sqrt{1+f'(x_1)^2}} \, d\sigma({\bf x})
 = \int_{\Gamma_t} {x_2 \,(0,\partial_t f(x_1))}  \cdot \frac {(-f'(x_1),1)} {\sqrt{1+f'(x_1)^2}} \, d\sigma({\bf x}) .
 \]
We now use the fact that $\partial_t f(x_1)$ is precisely the number such that at time $t$ we have
 \[
 {(0,\partial_t f(x_1))}  \cdot \frac {(-f'(x_1),1)} {\sqrt{1+f'(x_1)^2}} = u({\bf x})  \cdot \frac {(-f'(x_1),1)} {\sqrt{1+f'(x_1)^2}} ,
 \]
where the second vector on both sides is the unit outer normal $n({\bf x})$ to $\Omega_t'$ at ${\bf x}=(x_1,x_2)$.  So 1-periodicity of $f$, the divergence theorem, and $\nabla\cdot u\equiv 0$ yield
 \begin{align*}
 \frac 12\,  \frac d{dt}\|f\|_{L^2([0,1])}^2 &   = \int_{\partial\Omega_t'} x_2 u({\bf x}) \cdot n({\bf x})\, d{\bf x}  = \int_{\Omega_t'} u_2({\bf x}) \, d{\bf x} = \int_{[0,1]\times[0,\infty)} \rho({\bf x}) u_2({\bf x})  \, d{\bf x}
\\ & =  \int_{[0,1]\times[0,\infty)} \rho({\bf x}) \, \partial_{x_1} (-\Delta)^{-1} \rho_{x_1}({\bf x},t)
= -\|\rho_{x_1}\|_{\dot H^{-1} ([0,1]\times[0,\infty))}^2.
 \end{align*}
The integration by parts at the end can be justified by approximating $\rho$ by smooth characteristic functions $\rho_n$ of $\Omega_t'$ with $u^n:= \partial_{x_1} (-\Delta)^{-1}(\rho_n)_{x_1}$, and then $\rho_n\to\rho$ in $L^2([0,1]\times[0,\infty))$ implies $u^n\to u$ in $L^2( [0,1]\times[0,\infty))$ and $(\rho_n)_{x_1}\to\rho_{x_1}$ in $\dot H^{-1}( [0,1]\times[0,\infty))$.

\vskip 3mm
\noindent
{\bf Proof of (ii).}  
This proof is almost identical to the one above, with all integrals over $[0,1]$ being now over $\bbR$ (which simplifies some steps) and with the whole argument estimating instead
\[
\frac d{dt}\|\tilde f(\cdot,t)\|_{L^1(\bbR)} \qquad\text{and}\qquad \frac d{dt}\|\tilde f(\cdot,t)\|_{L^2(\bbR)}^2.
\]
Since $\tilde f_x=f_x$ and $\tilde f_t=f_t$, all steps in the proof easily extend to this setting.  We note that in the last part of the proof we use
\begin{align*}
\Omega_t' &:= \big\{ {\bf x}\in\bbR\times(0,\infty) \,\big|\, x_2< f(x_1,t) \big\},
\\ \Omega_t^\pm &:= \big\{ {\bf x}\in\bbR\times(0,\infty) \,\big|\, \pm \psi_\infty < \pm x_2< \pm f(x_1,t) \big\}, 
\end{align*}
and also $x_2-\psi_\infty$ in place of $x_2$.  This then yields
 \begin{align*}
\frac 12\,  \frac d{dt}\|f\|_{L^2(\bbR)}^2   & = \int_{\partial\Omega_t^+} (x_2-\psi_\infty) u({\bf x}) \cdot n({\bf x})\, d{\bf x}
- \int_{\partial\Omega_t^-} (x_2-\psi_\infty) u({\bf x}) \cdot n({\bf x})\, d{\bf x}
\\ & = \int_{\Omega_t^+} u_2({\bf x}) \, d{\bf x} - \int_{\Omega_t^-} u_2({\bf x}) \, d{\bf x} = PV \int_{\Omega_t'} u_2({\bf x}) \, d{\bf x}
 \end{align*}
 because $PV \int_{\bbR\times[0,\psi_\infty]} u_2({\bf x}) \, d{\bf x}=0$.

%%%%%%%%%%%%%%%%%%%%%%%%%%%%%%%%%%%%%%%%%%%%%%
\section{Proof of Theorem \ref{T.1.2}} \lb{S2}
%Start of the Proof of Theorem \ref{T.1.1} and 
%%%%%%%%%%%%%%%%%%%%%%%%%%%%%%%%%%%%%%%%%%%%%%

%We start with definitions of some more norms and seminorms.  
For  $k=0,1,\dots $ and $\gamma\in[0,1]$ let 
\begin{align*}
% \|g\|_{\dot C^\gamma(\bbR)}  := \sup_{x\neq y} \frac{|g(x)-g(y)|}{|x-y|^\gamma} 
% \qquad\text{and}\qquad
 \|g\|_{C^{k,\gamma}(\bbR)}  :=  \sum_{j=0}^k \|g^{(j)}\|_{L^\infty(\bbR)}+ \|g^{(k)}\|_{\dot C^{\gamma}(\bbR)}
\end{align*}
when $g\in L^2_{\rm loc}(\bbR)$
(note that then $C^{k,0}(\bbR)=W^{k,\infty}(\bbR)$),
%, when $g^{(k)}$ is only bounded) 
and for $k\ge 1$ define the seminorms (vanish on constant functions) 
\begin{align*}
 \|g\|_{C^{k,\gamma}_\gamma(\bbR)}  := \|g'\|_{C^{k-1,\gamma}(\bbR)} 
+ \|g\|_{\ddot C^{1-\gamma}(\bbR)}  
 \qquad\text{and}\qquad
\|g\|_{C^{k}_\gamma(\bbR)}:=\|g'\|_{C^{k-1,0}(\bbR)} 
+ \|g\|_{\ddot C^{1-\gamma}(\bbR)} .
\end{align*}
%All but the second of these are  pseudo-norms that vanish on constant functions. 
Note that when $\gamma\in(0,\frac 12]$ 
%(which we assume here) 
and $k\ge 1$, then
\beq\lb{2.0aa}
\|g\|_{C^{k-1,\gamma}_\gamma(\bbR)}\le C_{k,\gamma} \|g\|_{\tilde H^{k}_\gamma(\bbR)}
\eeq
holds for all $g\in L^2_{\rm loc}(\bbR)$, with some constant $C_{k,\gamma}<\infty$.  We will assume without loss that $\gamma\in(0,\frac 12]$ because $\|g\|_{\tilde H^3_\gamma(\bbR)}<\infty$ implies $\|g\|_{\tilde H^3_{\gamma'}(\bbR)}<\infty$ for all $\gamma'\in(0,\gamma]$.

Since
\[
\frac{ f(x)\pm f(x-y) \pm y\, f'(x-y)}{y^2+(f(x)\pm f(x-y))^2}  
= - \frac d{dy} \arctan  \frac{ f(x)\pm f(x-y)} y ,
\]
we see that
\[
 \int_{\bbR}  \frac{ f(x)\pm f(x-y) \pm y\, f'(x-y)}{y^2+(f(x)\pm f(x-y))^2}  \, dy=
 \begin{cases}
 \pi & \text{$\pm$ is $+$ and $f(x)>0$,}
 \\ 0 &  \text{$\pm$ is $-$ or $f(x)=0$}
 \end{cases}
\]
when $\| f\|_{C^2_\gamma(\bbR)}<\infty$ (and $f\ge 0$ when $\pm$ is $+$).
Hence \eqref{1.13} can be equivalently written as
\begin{equation} \lb{1.6a}
f_t(x,t) =  PV \int_\bbR  \frac{(x-y) f_x(x,t)-(f(x,t) - f(y,t))}{(x-y)^2+(f(x,t) - f(y,t))^2}  dy ,
\end{equation}
where we also changed variables $y\leftrightarrow x-y$, while \eqref{1.5} is equivalent to
\begin{equation} \lb{1.6}
f_t(x,t) = \pi \chi_{(0,\infty)}(f(x,t)) +  \sum_\pm PV \int_\bbR  \frac{(x-y) f_x(x,t)-(f(x,t)\pm f(y,t))}{(x-y)^2+(f(x,t)\pm f(y,t))^2}  dy .
\end{equation}
Note that the last integral with $\pm$ being $+$ is discontinuous at all points in $\partial\{x\,|\, f(x,t)=0\}$, but this will not cause a problem because our analysis of \eqref{1.6} will be performed at points where $f_x(x,t)\neq 0$ and therefore $f(x,t)>0$.
Note also that ``PV'' is only needed in \eqref{1.6a} and \eqref{1.6}  as $|y|\to\infty$ but not as $|y|\to 0$ when $\|f(\cdot,t)\|_{C^2_\gamma(\bbR)}<\infty$.

%Estimate (10.1) in \cite{ZlaMuskatLocal}, with $F_1:=f(\cdot+z,t)$ ($z\neq 0$) and $F_2:=f(\cdot,t)$, shows that 
%\[
%\|f_t(\cdot,t)\|_{W^{1,\infty}} \le  \|f_t(\cdot,t)\|_{L^\infty} + C \left(1+\|f(\cdot,t)\|_{C^2_\gamma}^2\right) \|f_x(\cdot,t)\|_{C^{1,\gamma}}
%\]
%for any $t\in[0,T_\psi)$.  This, \eqref{2.6X}, and \eqref{1.7} yield
%\[
%\sup_{t\in[0,T]} \|f_t(\cdot,t)\|_{W^{1,\infty}(\bbR)}<\infty
%\]
%for each $T\in[0,T_\psi)$.
%But then 

Finally, we observe that the second claim in \eqref{1.7a} implies
\[
\sup_{x\in\bbR} \|f_x(x,\cdot)\|_{W^{1,\infty}([0,T])}\le \|f_x\|_{L^\infty(\bbR\times [0,T])} + \sup_{t\in[0,T]} \|f_t(\cdot,t)\|_{W^{1,\infty}(\bbR)} <\infty
\]
for each $T\in[0,T_\psi)$. This and \eqref{1.7} yield $f_x\in W^{1,\infty}(\bbR\times[0,T])$,
%, as claimed in the remark  after Theorem \ref{T.1.2}.  We note that while (10.1) in \cite{ZlaMuskatLocal} is stated for \eqref{1.5}, the terms coming from the two fractions in \eqref{1.5} are estimated separately in its proof, so it equally holds for \eqref{1.13}.
so $M_t:= \|f_x(\cdot,t)\|_{L^\infty}$ is locally Lipschitz on $[0,T_\psi)$.

%This proves the first claim in both Theorem \ref{T.1.2}(i) and (ii).  It remains to prove the second claims in both parts.

\vskip 3mm
\noindent
{\bf Proof of (i).} 
Let $M_t$ be as above and assume that $M_t\in(0,1]$ for some $t$ (if $M_t=0$, then $f(\cdot,t)\equiv C$ for some $C\in\bbR$, and then this holds for all $t'\ge t$).
% and assume that $M_t\in(0, 1]$ for some $t\in[0,T_\psi)$.  
For the sake of simplicity, in the following argument we drop $t$  from the notation (other than in $M_t$) and denote $g':=g_x$.  

For any $(x,t)\in\bbR\times[0,T_\psi)$ with $f'(x)\neq 0$ we have
\[
\begin{split}
\partial_t f'(x) &= \frac d{dx}  PV \int_\bbR   \frac{(x-y) f'(x)-(f(x) - f(y))}{(x-y)^2+(f(x) - f(y))^2} \, dy 
\\ & = PV \int_\bbR   \frac{(x-y) f''(x)}{(x-y)^2+(f(x) - f(y))^2} \, dy  
%- 2 \,PV \int_\bbR \frac{[ f'(x)-D(x,y)] \, [1 + f'(x)D(x,y)] }{ (x-y)^2\, [1+D(x,y)^2]^2 }  \, dy,
-  2{f'(x)} PV \int_\bbR  h_{f'(x)}(y, f(x)-f(x-y))  \, dy,
\end{split}
\]
where
\[
h_a(y, r):= \frac{y^2 - r^2 - (a^{-1}-a) yr }{ (y^2+r^2)^2 }
\]
%where
%\[
%D(x,y):=\frac{f(x)-f(y)}{x-y} \in \left[-M_t,M_t \right]
%\]
and so the last integral is just
\[
\frac 1{f'(x)} PV \int_\bbR  \frac{[(x-y) f'(x)-(f(x)- f(y))] \, [x-y + f'(x)(f(x)- f(y))] }{ [(x-y)^2+(f(x)- f(y))^2]^2 } \, dy
\]
after changing variables $y\leftrightarrow x-y$.  We note that \eqref{22.32} and \eqref{22.33} below show that both integrals converge in the principal value sense.

When $|f'(x)|=M_t\in(0, 1]$, then $f''(x)=0$ and $h_{f'(x)}(y, f(x)-f(x-y))\ge 0$ for all $y\in\bbR$ (see \eqref{22.29} below), with the inequality being sharp when $|y|$ is large (since equality only holds when $f(x)-f(x-y)=f'(x)(x-y)$).  Hence $\partial_t f'(x)<0$, which yields the result when such $x$ exists for each $t\in[0,T_\psi)$.  But this need not be the case when only $\|f\|_{\tilde H^3_\gamma(\bbR)}<\infty$.  One could circumvent this issue by looking at points where $|f'(x)|$ is maximal on $[x-L,x+L]$ for large $L$, but this will not be the case for the much more involved argument in (ii) because an estimate like \eqref{22.29} is not available for the analogous term coming from the second integral in \eqref{1.6}.  We will therefore instead estimate $\partial_t f'(x)$ at all points where $|f'(x)|$ is close to $M_t$, which will also work in (ii).  Our argument is based on replacing $h_{f'(x)}(y, f(x)-f(x-y))$ by $h_{\sgn(f'(x)) M_t}(y, f(x)-f(x-y))$ and estimating the difference, and then using that $h_{\sgn(f'(x)) M_t}(y, f(x)-f(x-y))\ge 0$ for all $y\in\bbR$ if also $M_t\le 1$.

Let us consider any $\delta\in(0,\frac 12 M_t]$ and any $x\in\bbR$ such that $|f'(x)|\ge M_t-\delta$.  
We have
%%%\beq\lb{22.29}
%%%\begin{split}
%%%h_{\sgn(f'(x)) M_t}(y, f(x)-f(x-y)) & \ge \frac{ (1 - M_t^2  - |M_t^{-1}-M_t|\, M_t)\,y^2 }{ y^4 }
%%%\\ & \ge -\frac{2(M_t+1)(M_t-1)_+} {y^2}
%%%\end{split}
%%%\eeq
\beq\lb{22.29}
h_{\sgn(f'(x)) M_t}(y, f(x)-f(x-y))  \ge \frac{ (1 - M_t^2  - |M_t^{-1}-M_t|\, M_t)\,y^2 }{ y^4 } \ge 0
\eeq
for all $y\in\bbR$ because $|f(x)-f(x-y)|\le M_t|y|$ and $M_t\le 1$.  We also have
 \beq\lb{22.31}
 \left| h_{f'(x)}  (y,  f(x)-f(x-y)) - h_{\sgn(f'(x)) M_t}(y, f(x)-f(x-y)) \right| \le  \frac{  M_t^2+4 }{ M_t y^2 } \,\delta
\eeq
for $y\in\bbR$ because
\[
|(b^{-1}-b) - (a^{-1}-a)| \le |b-a|\, (1+\min\{a,b\}^{-2})
\]
for $a,b>0$.   Writing
\[
f(x)-f(x-y) = f'(x)y - \frac{f''(z)}2y^2 
%= f'(x)y - \frac{f''(x)}2y^2 + \frac{f''(x)-f''(z)}2 y^{2}
\]
for some $z$ between $x$ and $x-y$ 
%(so that $|\kappa(x,y)|\le \|f''\|_{\dot C^\gamma}|y|^\gamma$) 
shows that the numerator of $h_{f'(x)}(y, f(x)-f(x-y))$ is
\[
 \frac { (1+f'(x)^2)f''(x)}{2f'(x)} \, y^3 + \frac {(1+f'(x)^2) (f''(z)-f''(x))}{2f'(x)} \,  y^3 - \frac{f''(z)^2}4\, y^4.
\]
Estimating the last two terms by
\[
\frac {2(1+\|f'\|_{C^{1,\gamma}}^3)}{|f'(x)|} |y|^{3+\gamma}
\]
for $|y|\le 1$, and using
\[
\left| \frac {y^3}{(y^2+a_-^2)^2} - \frac {y^3}{(y^2+a_+^2)^2} \right| \le  \frac {2(a_-+a_+)(a_--a_+)}{|y|^3}
\]
with $a_\pm:=f(x)-f(x\pm y)$, we see that there is a constant $C_\gamma$ (depending only on $\gamma$) such that for any $\eps\in[0,1]$ we have
\beq\lb{22.32}
\left| PV \int_{-\eps}^\eps  h_{f'(x)}(y, f(x)-f(x-y))  \, dy \right| 
 \le  \frac {C_\gamma (1+\|f'\|_{C^{1}}^5)}{ |f'(x)|} \,\eps +  \frac {C_\gamma (1+\|f'\|_{C^{1,\gamma}}^3)}{ |f'(x)|} \, \eps^{\gamma}.
\eeq

Finally, we  have
\[
\left| PV \int_{\bbR} \frac{x-y}{(x-y)^2+(f(x)\pm f(y))^2} \, dy \right| \le C_\gamma \|f\|_{C^2_\gamma} 
\]
for some new $C_\gamma$  (see (2.2) in \cite{ZlaMuskatLocal}), as well as
\beq\lb{22.34}
|f''(x)| \le 2 \|f''\|_{\dot C^\gamma}^{1/(1+\gamma)} (M_t - |f'(x)|)^{\gamma/(1+\gamma)} .
\eeq
The latter holds because $|f''(y)-f''(x)|\le \frac12 |f''(x)|$ whenever $|y-x|\le |f''(x)|^{1/\gamma} (2\|f''\|_{\dot C^\gamma} )^{-1/\gamma}$, and so 
\[
M_t - |f'(x)| \ge  \sgn(f'(x)) \left[ f'\left(x+ \sgn(f'(x))\frac {f''(x)^{1/\gamma} } {(2\|f''\|_{\dot C^\gamma})^{1/\gamma}  } \right) - f'(x) \right]
 \ge   \frac {f''(x)^{1/\gamma} } {(2\|f''\|_{\dot C^\gamma})^{1/\gamma} }  \frac {f''(x)}2 .
\]
%(Note that this means that $M$ is Lipschitz on $[0,T]$ for any $T\in[0,T_\psi)$.)
It follows that (recall also \eqref{2.0aa})
\beq \lb{22.33}
 \left| PV \int_\bbR   \frac{(x-y) f''(x)}{(x-y)^2+(f(x) \pm f(y))^2} \, dy \right| \le 2 C_\gamma \|f\|_{C^{2,\gamma}_\gamma}^{(2+\gamma)/(1+\gamma)} \delta^{\gamma/(1+\gamma)}.
\eeq

Estimates \eqref{22.29}, \eqref{22.31}, \eqref{22.32}, and \eqref{22.33} 
%provide an upper bound on $\partial_t f_x(\cdot,t)$, showing that 
%\[
%\sup_{0\le t'\le t\le T} \frac{M_t-M_{t'}}{t-t'} <\infty
%\]
% for any $T\in[0,T_\psi)$ (take some $\delta\in(0,\frac 12 M_t]$, use \eqref{22.32} with $\eps=1$, and \eqref{22.29} and \eqref{22.31} for $|y|\ge 1$).  Moreover, they also 
 show that with some new $C_\gamma$
%%%\beq\lb{22.33a}
%%%\eps_{\delta,t}:= \min\left\{ \max \{\delta,(M_t-1)_+\}^{1/2},1 \right\}
%%%\eeq
we have 
%%%(recall that $\gamma\in(0,\frac 12]$)
%then also that if $S_\mu:=\{t\in[0,T_\psi)\,|\, M_t\in [\mu,1]\}$ for some $\mu\in(0,1]$, then
\beq\lb{22.35}
 \sgn(f_x(x,t))\, \partial_t f_x(x,t)  
 \le  {C_\gamma \left(1+\|f(\cdot,t)\|_{C^{2,\gamma}_\gamma}^5 \right)}  \delta^{\gamma/2}
 %%%\left( \eps_{\delta,t}+\eps_{\delta,t}^{\gamma/(1+\gamma)} \right)
\eeq
when $M_t>0$, $\delta\in(0,\frac 12 M_t]$, and $|f'(x)|\ge M_t-\delta$
(use $\eps:=\sqrt\delta$ in \eqref{22.32}, and then $|y|\ge \eps$ in \eqref{22.29} and \eqref{22.31}).
%Together with $\liminf_{t\to t'^+} M_t\ge M_{t'}$ for $t'\in[0,T_\psi)$ this means that $M$ is right-continuous.
%From
%\[
% |D(x,y)| \le \min \left\{ M_t,  \|f\|_{\dot C^{1-\gamma}} |x-y|^{-\gamma}  \right\}
%\]
%we see that
%\[
%\min\{ f'(x)-D(x,y), 1 + f'(x)D(x,y) \} \ge 0
%%[M_t-|f'(x)|]\, [1+M_t^2]
%\]
%when $|x-y|\ge \|f\|_{\dot C^{1-\gamma}}^{1/\gamma} \min\{|f'(x)|,1\}^{-1/\gamma}$, while for all $y$ we have
%\begin{align*}
%\sgn(f'(x))\,[ f'(x)-D(x,y)] & \, [1 + f'(x)D(x,y)] \,(x-y)^{-2}
%\\ & \ge -\max\left\{ (M_t - |f'(x)|)\,(1+M_t^2), 2M_t(M_t^2-1) \right\}.
%\end{align*}
%It follows from this and \eqref{22.33} that
%for any $\delta\in(0,\frac 12 M_t]$ and any $x\in\bbR$ such that $|f'(x)|\ge M_t-\delta$ we have
%\beq \lb{22.21}
%\sgn(f'(x))\,  \partial_t f'(x) \le 2 C_\gamma \|f\|_{C^{2,\gamma}_\gamma}^{3/2} \sqrt\delta + 4\|f\|_{\dot C^{1-\gamma}}^{1/\gamma} \min \left\{\frac { M_t}2,1 \right\}^{-1/\gamma} (1+M_t^2) \, \delta.
%\eeq
%%because we also have \eqref{22.20} and
%%\[
%%|D(x,y)| \le \|f\|_{\dot C^{1-\gamma}} |x-y|^{-\gamma} = \frac {M_t} {2\|f\|_{\dot C^{1-\gamma}}} \le |f'(x)|
%%\]
%%whenever $|x-y|\ge (2\|f\|_{\dot C^{1-\gamma}})^{1/\gamma} M_t^{-1/\gamma}$.  
%Since \eqref{22.21} holds for all $x\in\bbR$ such that $|f'(x)|\ge M_t-\delta$, 
Taking $\delta\to 0$ in \eqref{22.35} and using \eqref{1.7} (together with $\gamma\in(0,\frac 12]$) now yields $\frac d{dt} M_t\le 0$.   Moreover, we have
\beq\lb{22.51}
h_{\sgn(f'(x))M_t}(y, f(x)-f(x-y)) \ge \frac{ \left[1 - \left( \frac12 M_t\right)^2  - |M_t^{-1}-M_t|\, \frac 12M_t \right] y^2 }{ y^4 } \ge \frac 1{2y^2}
\eeq
%%%when $M_t\le \frac 2{\sqrt 3}$ and
for  $|y|\ge (2\|f\|_{\dot C^{1-\gamma}})^{1/\gamma} M_t^{-1/\gamma}$, since $|  f(x)-f(x-y) | \le    \frac{M_t}2\,|y|$ for these $y$.  Joining this with \eqref{22.29} adds a negative constant to the right-hand side of \eqref{22.35} that is uniform in $\delta$ and $x$,
 %%%  as well as in $(t,x)\in[0,T]\times\bbR$ with $M_t\le \frac 2{\sqrt 3}$, for any $T\in[0,T_\psi)$.  This adjustment 
 which makes the right-hand side of this adjusted version of \eqref{22.35} negative 
 %%%for any $(x,t)\in\bbR\times [0,T]$ with $|f_x(x,t)|\ge M_t-\delta$ and  $\max\{\delta, (M_t-1)_+\}$ small enough.  
 when $\delta$ is small enough.  
 %%%Hence \eqref{22.35a} extends to all $t'\in[0,T)$ with small enough $(M_{t'}-1)_+$, proving Theorem \ref{T.1.2}(i) as stated (i.e., with  $M_{t'}\le 1$).
 Thus $\frac d{dt} M_t< 0$, which finishes the proof.

\vskip 3mm
\noindent
{\bf Proof of (ii).} 
We proceed as in (i), again assuming that $M_t\in(0,1]$, but this time we have
\[
\begin{split}
\partial_t f'(x) &= 
%\frac d{dx}  PV \int_\bbR   \frac{(x-y) f'(x)-(f(x) - f(y))}{(x-y)^2+(f(x) - f(y))^2} \, dy 
 \sum_\pm PV \int_\bbR   \frac{(x-y) f''(x)}{(x-y)^2+(f(x) \pm f(y))^2} \, dy  
%- 2 \,PV \int_\bbR \frac{[ f'(x)-D(x,y)] \, [1 + f'(x)D(x,y)] }{ (x-y)^2\, [1+D(x,y)^2]^2 }  \, dy,
\\ & -  2{f'(x)} \sum_\pm  PV \int_\bbR  h_{f'(x)}(y, f(x)\pm f(x-y))  \, dy.
\end{split}
\]
Estimates from the proof of (i) apply to three of the integrals above.  As for the fourth, similarly to 
%\eqref{22.29} and  
\eqref{22.31} we obtain
%\beq\lb{22.37}
%h_{\sgn(f'(x)) M_t}(y, f(x)+f(x-y))  \ge -\frac{1+|M_t^{-1}-M_t|} {y^2+f(x)^2}
%\eeq
%and
 \beq\lb{22.36}
 \left| h_{f'(x)}  (y,  f(x)+f(x-y)) - h_{\sgn(f'(x)) M_t}(y, f(x)+f(x-y)) \right| \le  \frac{  M_t^2+4 }{ M_t^2 (y^2+f(x)^2) } \,\delta
\eeq 
for $y\in\bbR$,
where we used  $|yr|\le y^2+r^2$ instead of $|f(x)-f(x-y)|\le M_t|y|$, as well as that $f(x)+f(x-y)\ge f(x)$ because now $f\ge 0$.  We also have
\[
|f'(x)| \le 2 \|f''\|_{L^\infty}^{1/2} \sqrt{f(x)},
\]
which is proved similarly to \eqref{22.34} (using $f\ge 0$), so
 \beq\lb{22.38}
f(x) \ge  \frac{M_t^2} { 16 \|f''\|_{L^\infty}}
\eeq
when $\delta\in(0,\frac 12 M_t]$, and $|f'(x)|\ge M_t-\delta$.

Hence 
%%%if for $\mu\in(0,1)$ we define the open set $S_\mu:=\{t\in [0,T_\psi)\,|\, \mu<M_t<1\}$, then
\eqref{22.31}, \eqref{22.32}, \eqref{22.33}, 
% \eqref{22.37}, 
\eqref{22.36}, and \eqref{22.38} (but not \eqref{22.29}!) show that 
%\[
%\sup_{t'\in[0,T]\cap S_\mu} \limsup_{t\to t'^+} \frac{M_t-M_{t'}}{t-t'} <\infty
%\]
%for any $(T,\mu)\in[0,T_\psi)\times(0,1)$.  
%%This and lower semi-continuity of $M$ show that the set $S_\mu':=\{t\in [0,T_\psi)\,|\, \mu<M_t<1\}$ is right-open.  
%However, we cannot conclude a version of  \eqref{22.35} yet because the right-hand side of \eqref{22.37} does not converge to 0 as $\delta\to 0$.  Instead we now ignore  \eqref{22.29} and  \eqref{22.37}, and use the other five estimates to obtain
\beq\lb{22.39}
\begin{split}
 \sgn(f_x(x,t))\, \partial_t f_x(x,t)  
 & \le  {C_\gamma \left(1+\|f(\cdot,t)\|_{C^{2,\gamma}_\gamma}^5 \right)}  \delta^{\gamma/2} 
 + \frac{C_\gamma} {M_t}\sqrt\delta  + \frac {C_\gamma   \|f''(\cdot,t)\|_{L^\infty}^2}{ M_t^4} \sqrt{\delta}
% + \frac{C_\gamma} {\mu^4}\delta  + \frac {C_\gamma   \|f''(\cdot,t)\|_{L^\infty}^2}{ \mu^5} \sqrt{\delta} 
\\ & -  2{f'(x)} \sum_\pm  \int_{|y|\ge\sqrt\delta}  h_{\sgn(f'(x))M_t}(y, f(x)\pm f(x-y))  \, dy
 \end{split}
\eeq
when 
%%%$t\in S_\mu$, 
$\delta\in(0,\frac 12 M_t]$ and $|f'(x)|\ge M_t-\delta$, where
%%%  Here we applied the estimates yielding \eqref{22.35} (but not \eqref{22.29}!) with $\eps_{\delta,t}=\sqrt\delta<1$ because now $M_t< 1$, the term $C_\gamma \mu^{-4}\delta$ 
$C_\gamma M_t^{-1}\sqrt\delta$ comes from \eqref{22.36} for $|y|\ge\sqrt\delta$, and we also used that \eqref{22.38} and $f\ge 0$ imply
\[
 \int_{-\sqrt\delta}^{\sqrt\delta}  h_{f'(x)}(y, f(x)+ f(x-y))  \, dy \ge -2\sqrt\delta \frac {1+M_t^{-1}}{(M_t^2/16 \|f''\|_{L^\infty})^2}.
%%% \int_{-\sqrt\delta}^{\sqrt\delta}  h_{f'(x)}(y, f(x)+ f(x-y))  \, dy \ge -2\sqrt\delta \frac {1+2\mu^{-1}}{(\mu^2/16 \|f''\|_{L^\infty})^2}
 \]
%%%for $(t,x)$ as above.

Taking $\delta\to 0$ in \eqref{22.39} 
%for any fixed $\mu\in(0,1)$ will now again conclude \eqref{22.35a} for all
%$t'\in[0,T_\psi)$ such that $M_{t'}\in(0,\frac 3{10}]$, 
will conclude $\frac d{dt} M_t<0$ when $M_t\in(0,\frac 3{10}]$ (and hence finish the proof)
if we can prove
\beq\lb{22.40}
\liminf_{\delta\to 0} \, \inf_{f\in \calF_{a,c,\delta}} \, \sum_\pm  \int_{|y|\ge\sqrt\delta}  h_{a}(y, f(0)\pm f(y))  \, dy > 0
\eeq
for any $a\in(0,\frac 3{10}]$ and $c\in[1,\infty)$, where 
\[
\calF_{a,c,\delta} := \left\{f\ge 0\,\big|\, \|f'\|_{L^\infty}= a \,\,\&\,\, f'(0)\le -a+\delta \,\,\&\,\, \|f\|_{C^{2}_\gamma}\le c \right\}.
\]
Here we assume without loss that $x=0$ and $f'(0)<0$, which can be achieved by translating and reflecting $f$, and we also changed variables $y\leftrightarrow -y$, which replaces $h_{-a}$ by $h_a$.

Fix $a,c$ as above, and for each $\delta>0$ and each 
$f\in \calF_{a,c,\delta}$ let
\[
 f_{\delta}(y)=
\begin{cases} 
f(0)-ay  & y\in \left( \frac{f(0)-f(-\sqrt\delta)}a , \frac{f(0)-f(\sqrt\delta)}a \right),
\\  f(\sqrt\delta) & y\in \left( \frac{f(0)-f(\sqrt\delta)}a, \sqrt\delta  \right),
\\  f(-\sqrt\delta) & y\in \left( -\sqrt\delta,\frac{f(0)-f(-\sqrt\delta)}a \right),
\\ f(y) & |y|\ge\sqrt\delta
\end{cases}
\]
(generally $f_{\delta}\notin \calF_{a,c,\delta}$). 
Then $h_{a}(y, f_{\delta}(0)- f_{\delta}(y))=0$ for $y\in (\frac{f(0)-f(-\sqrt\delta)}a , \frac{f(0)-f(\sqrt\delta)}a)$, and 
\beq\lb{22.41}
\sqrt\delta - \frac{c+1}a\delta \le  \pm \frac{f(0)-f(\pm\sqrt\delta)}a  \le\sqrt\delta
\eeq
because
\[
a\ge f'(y)\ge a-\delta-c\sqrt\delta \ge a-(c+1)\sqrt\delta
\]
for $|y|\le\sqrt\delta$.
Hence
\[
\sup_{f\in \calF_{a,c,\delta}}
\left| \int_{|y|\le\sqrt\delta}  h_{a}(y, f_{\delta}(0)- f_{\delta}(y))  \, dy \right| 
\le \frac{2(c+1)}a \delta \sup_{az,w\in[0,(c+1)\delta]} \left|h_a \left( \sqrt\delta-z,a\sqrt\delta - w \right) \right| \to 0
\]
as $\delta\to 0$ because the $\sup$ is $O(\delta^{-1/2})$ (the $O(\delta)$ terms in the numerator of $h_a( \sqrt\delta-z,a\sqrt\delta - w)$ cancel).  Since \eqref{22.38} (with $M_t=a$) shows that for all  $f\in \calF_{a,c,\delta}$ and all  $|y|\le \frac{a^3} { 16 c}$ ($\le a\, f(0)$) we have
\[
|h_{a}(y, f_{\delta}(0)+ f_{\delta}(y))| \le \frac{2 (f(0)+ f_{\delta}(y))^2}{(f(0)+ f_{\delta}(y))^4} \le \frac{2 }{f(0)^2} \le \frac { 512 c^2}{a^4},
\]
 for all small $\delta$ we obtain
\[
\sup_{f\in \calF_{a,c,\delta}}
\left| \int_{|y|\le\sqrt\delta}  h_{a}(y, f_{\delta}(0)+ f_{\delta}(y))  \, dy \right| 
\le \frac { 1024 c^2}{a^4} \sqrt\delta \to 0.
\]
Hence \eqref{22.40} will follow from 
\beq\lb{22.50}
\inf_{f\in \calF_{a,c}} \, \sum_\pm  \int_{\bbR}  h_{a}(y, f(0)\pm f(y))  \, dy > 0
\eeq
for any $a\in(0,\frac 3{10}]$ and $c\in[1,\infty)$, where
\[
 \calF_{a,c}:= \left\{f\ge 0\,\big|\, \|f'\|_{L^\infty} = a = -f'(0)  \,\,\&\,\, \|f\|_{\dot C^{1-\gamma}}\le c \,\,\&\,\, f(0)\ge a^2(16c)^{-1} \right\}
 % \ni f_{\delta}.
\supseteq \{ f_{\delta}\,|\, f\in \calF_{a,c,\delta} \}
\]

It turns out that the constraints involving $c$ are not needed if we only want to prove \eqref{22.50} with the $\ge$ inequality, which we do next.  We then derive \eqref{22.50} as stated at the end of this section.
Let us from now on fix any $a\in(0,\frac 3{10}]$ and denote
\[
h(y, r):= h_a(y,r)=\frac{y^2 - r^2 - (a^{-1}-a) yr }{ (y^2+r^2)^2 }.
\]

\begin{lemma} \lb{L.2.1}
For $a\in(0,\frac 3{10}]$, let $\calF$ be the set of all $f\ge 0$ such that $f'(0)=-a=\|f'\|_{L^\infty}$.  Then
\beq \lb{22.2}
H(f):= \int_{\bbR}  \big[ h(y, f(0)+f(y)) + h(y, f(0)-f(y)) \big] \, dy \ge 0
\eeq
for all $f\in \calF$, and equality in \eqref{22.2} holds only when $f(y)\equiv a|y- {f(0)}a^{-1}|$.
\end{lemma}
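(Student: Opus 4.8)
\emph{Normalizations and a complex identity.} First I would record two preliminary facts. Since $f\ge 0$ and $f'(0)=-a<0$ we have $b:=f(0)>0$, and $H$ is homogeneous of degree $-1$ under the parabolic rescaling $f\mapsto\mu f(\cdot/\mu)$, which preserves $\calF$; so I may normalize $b$ (say $b=a$, putting the tent's vertex at $y=1$). Second, and more importantly, one has the identity
\[
h(y,r)=\real\frac{1-ic}{(y+ir)^2},\qquad c:=\frac{a^{-1}-a}{2},
\]
together with the pointwise sign rule $\sgn h(y,r)=\sgn\big[(ay-r)(a^{-1}y+r)\big]$; in particular $h(y,ay)\equiv0$, and $h(y,\cdot)$ changes sign as its argument crosses $ay$ or $-a^{-1}y$. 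I would also note that \eqref{22.2} is really a principal value at $y=0$ (where $b-f(0)=0$): since $f'(0)=-a$ forces $b-f(y)=ay+O(y^2)$, the term $h(y,b-f(y))$ has only an odd $O(|y|^{-1})$ singularity there, and the principal value converges.

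\emph{The extremal profile has $H=0$.} Next I would check $H(T)=0$ for the tent $T(y)=a|y-b/a|$. Its vertex is at $y=b/a>0$, so on $|y|<b/a$ we have $b-T(y)=ay$ \emph{exactly}, hence $h(y,b-T(y))\equiv0$ there and no principal value is needed. On each of the two rays one of $b\pm T(y)$ equals $ay$ (so its $h$ vanishes identically) and the other equals the affine function $2b-ay$. Since
\[
\int_\bbR h(y,\alpha y+\beta)\,dy=0
\]
for every affine $\alpha y+\beta$ — because then $h(y,\alpha y+\beta)=\real\big[(1-ic)\big((1+i\alpha)y+i\beta\big)^{-2}\big]$ is the real part of a rational function of $y$ that decays like $y^{-2}$ and has a single, off-axis, double pole, so its integral vanishes by residues (or by elementary partial fractions) — it follows that $H(T)=\int_\bbR h(y,2b-ay)\,dy=0$.

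\emph{Integration by parts.} The heart of the argument is to convert $H(f)$ into an integral of $f'$. Writing $w:=y+ib$ and using $\tfrac1{(w+if)^2}+\tfrac1{(w-if)^2}=\tfrac{2(w^2-f^2)}{(w^2+f^2)^2}$ and $\tfrac{w^2-f^2}{(w^2+f^2)^2}=\tfrac{d}{dy}\tfrac{-w}{w^2+f^2}-\tfrac{2wff'}{(w^2+f^2)^2}$ (here $f=f(y)$), one gets
\[
h(y,b+f)+h(y,b-f)=-2\,\frac{d}{dy}\real\!\left[\frac{(1-ic)\,w}{w^2+f^2}\right]-4f(y)f'(y)\,\real\!\left[\frac{(1-ic)\,w}{(w^2+f^2)^2}\right].
\]
Integrating this over $\{|y|>\eps\}$: the boundary terms at $\pm\infty$ vanish; the boundary terms at $\pm\eps$ are of order $\eps^{-1}$, but so is the near-$0$ part of the last integral, and the two cancel exactly (the cancellation being the identity $\real\frac{1-ic}{(1+ia)^2}=0$). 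Letting $\eps\to0$ yields a formula expressing $H(f)$ as $-4$ times a (conditionally convergent, regularized at $0$) integral of $f(y)f'(y)$ against the explicit kernel $\real\big[(1-ic)w(w^2+f^2)^{-2}\big]$; for $f=T$ this integrand vanishes identically off the vertex, recovering $H(T)=0$.

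\emph{The sign, equality, and the main obstacle.} It then remains to show this reduced integral is $\le0$. I would split it at $y=0$, substitute so that both halves run over $y>0$, and estimate the integrand from the appropriate side using only $f\ge0$, $|f'|\le a$, and $f(0)=b$. I expect the estimate to close precisely when $a\le\tfrac3{10}$, with that threshold appearing as the value at which some explicit expression in $a$ — coming from the worst-case configuration, presumably $f$ behaving like $a|y|$ near $0$ and flattening out for large $|y|$ — turns nonnegative, the positive mass coming from the region where $|y|$ is comparable to or exceeds $b/a$. The equality statement should then follow by running the estimate backwards: equality forces $|f'|=a$ a.e.\ with exactly one sign change, from $-a$ to $+a$, and $f\ge0$ active only at that change, i.e.\ $f(y)=a|y-b/a|$. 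The genuinely hard part is this last estimate: isolating the correct inequality, tracking the principal value near $0$, and showing the positive far-field contribution dominates the unavoidable negative contribution near $y=0$; I anticipate casework according to the size of $f(y)$ relative to $a|y|$ and $b$, which is presumably why Lemma~\ref{L.2.1} is only the start of the long argument of Sections~\ref{S2}--\ref{S3}.
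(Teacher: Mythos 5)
Your preliminaries are correct and harmless: the scaling normalization (the paper uses the same homogeneity, $H(\tilde f)=f(0)H(f)$), the identity $h(y,r)=\real\big[(1-ic)(y+ir)^{-2}\big]$ with $2c=a^{-1}-a$, the factorization of the numerator as $(ay-r)(a^{-1}y+r)$, and the verification $H(a|y-f(0)a^{-1}|)=0$ (your residue argument is equivalent to the paper's explicit antiderivative \eqref{22.2b}). But the proposal stops exactly where the lemma begins. After the integration by parts you are left with the assertion that a regularized integral of $ff'$ against the kernel $\real\big[(1-ic)w(w^2+f^2)^{-2}\big]$ is nonpositive, with equality only for the tent, and you explicitly defer this as ``the genuinely hard part,'' to be handled by unspecified casework in which the threshold $a\le\frac3{10}$ is ``expected'' to appear. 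That estimate \emph{is} the content of Lemma \ref{L.2.1}; no argument is offered for it, nor for the uniqueness of the equality case. In the paper this is where all the work lies: writing $\partial_r h(y,r)=y^{-3}g(r/y)$, the half-line $y<0$ is handled by a comparison with the tent that reduces to the translated-interval inequality \eqref{22.4} for $g$ (this is where the detailed structure of $g$ and the hypothesis $a\le\frac3{10}$ enter), while the half-line $y>0$ becomes Lemma \ref{L.2.2}, proved by a full variational argument (existence of a minimizer of $H^+$ via the a priori bound \eqref{22.2d} and Arzel\`a--Ascoli, then first-variation inequalities and the explicit antiderivatives $G^\pm$) occupying Section \ref{S3}. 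Nothing in your proposal substitutes for this, so the main inequality and the rigidity statement remain unproved.

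Two further points. First, your sanity check is wrong: for the tent the reduced integrand does \emph{not} vanish identically off the vertex. For $y$ large one has $w^2+f^2\approx(1+a^2)y^2$ and $ff'\approx a^2y$, so $-4ff'\,\real\big[(1-ic)w(w^2+f^2)^{-2}\big]\approx -4a^2(1+a^2)^{-2}y^{-2}\neq 0$; only the regularized integral, combined with the $O(\eps^{-1})$ boundary terms at $\pm\eps$ (which for the tent are genuinely singular, since $w^2+f^2$ vanishes at $y=0$), is zero. So your reduction is not pointwise adapted to the extremizer, and the sketched equality analysis (``equality forces $|f'|=a$ a.e.\ with one sign change'') does not follow from any pointwise sign consideration of that integrand. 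Second, no principal value is needed in \eqref{22.2}: as the paper's remark shows, $h(y,f(0)-f(y))\ge 0$ pointwise (your own sign rule, since $|f'|\le a$ gives $ay-r$ and $a^{-1}y+r$ the same sign as $y$ when $r=f(0)-f(y)$), so the integral exists in $[0,\infty]$; the ``odd $O(|y|^{-1})$ singularity'' picture is not accurate, though this slip is inessential.
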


{\it Remark.}
Note that there is no PV needed in \eqref{22.2} for $f\in\calF$.  
Indeed,  $h(y, f(0)+f(y))$ is  integrable on $\bbR$ because $f(0)>0$, while with
\[
%\beq \lb{22.1a}
\alpha(y):=a-\frac{f(0)-f(y)}y  \in [0,2a]
%\eeq
\]
%(recall that $\|f'\|_{L^\infty}=a$)
we have
\beq \lb{22.1b}
h(y, f(0)-f(y)) \ge \frac{a^{-1}-a} {[1+(a-\alpha(y))^2]^2} \, \frac  {\alpha(y)} {y^2} \ge 0
\eeq
because $a\in(0,1]$, so $\int_\bbR h(y, f(0)-f(y))\,dy\in[0,\infty]$ also exists.
%Since $f\in C^{2,\gamma}([-1,1])$ and $f'(0)=-a$, we obtain 
%\[
%%\beq \lb{22.1c}
%\kappa:=\sup_{y\in[-1,1]} \frac {| \alpha(y) |} {|y|^{1+\gamma}} = \sup_{y\in[-1,1]} |y|^{-1-\gamma} \left| \alpha(y) -\frac {f''(0)}2 y\right| <\infty
%%\eeq
%\]
%because $f''(0)=0$, 
%%We then see that
%%\[
%%\frac {\big| h(y, f(x)-f(x+y))+h(y, f(x)-f(x-y)) \big| } {a^{-1}+a}
%%\le  \, \frac \kappa{|y|^{1+\gamma}} + {8a \big| \alpha(y)-\alpha(-y) \big|}  \frac {f''(0)}{2|y|}
%%\]
%%for all small enough $y>0$ (such that $|\alpha(y)|\le a$), so
%%\[
%%PV \int_\bbR + h(y, f(x)-f(x+y))  \, dy
%%\]
%%exists.
%so the term $h(y, f(0)-f(y))$ is also integrable.
\smallskip

\begin{proof}
The remark above shows that the integral in \eqref{22.2}  exists and belongs to $\bbR\cup\{\infty\}$.
% because $h(y, f(0)+f(y))$ is integrable since clearly $f(0)>0$, and we have \eqref{22.1b}.
Next, if $f\in\calF$ and $\tilde f(y):=\frac{f(f(0)y)}{f(0)}$, then $\tilde f\in\calF$ and
$H(\tilde f)=f(0) H(f)$.
Therefore it suffices to consider the case $f(0)=1$, which we do from now on.   

Then  for any $S \subseteq \bbR$ we have
\beq \lb{22.2c}
\int_{S}   h(y, 1+f(y))  \, dy \ge -2\int_0^\infty \frac{y^2 + (2+ay)^2 + a^{-1} y(2+ay) }{ \max\{y,2-ay\}^4 } \, dy 
=: I \in(-\infty,0),
\eeq
which together with \eqref{22.1b} imples $H(f)\ge I$.
Since we also have
\beq \lb{22.2a}
H(a|y- a^{-1}|)= \int_\bbR  \big[ h(y, 2-ay) + h(y, ay) \big] \, dy =0
\eeq
because $ h(y, ay)=0$ for all $y\in\bbR$  and 
\beq \lb{22.2b}
a(1+a^2)\, h(y, 2-ay) = \frac d{dy} \, \frac { 1-3a^2 -2a(1-a^2)y  } { y^2+(2-ay)^2 },
\eeq
it suffices to consider only $f$ with $H(f)\le 1$, which then implies
\[
1-I\ge \int_{\bbR}   h(y, 1-f(y))  \, dy =  \int_{\bbR} \frac{a^{-1}-a} {[1+(a-\alpha(y))^2]^2} \, \frac  {\alpha(y)} {y^2}\, dy.
\]

If for some $y_0\in(0,1]$ we have $y_0\alpha(y_0)\ge Cy_0^{3/2}$ with $C:=1-I$, then from $(y\alpha(y))'=a+f'(y)\in[0,2a]$ we obtain
\[
y\alpha(y)\ge (C-2a) y_0^{3/2} \ge (C-2a) y^{3/2}
\]
for $y\in [y_0- y_0^{3/2},y_0]$, and hence \eqref{22.1b} shows that
\[
 \int_\bbR   h(y, 1-f(y))  \, dy \ge   \int_{y_0-y_0^{3/2}}^{y_0} \frac{a^{-1}-a} {[1+a^2]^2} \, \frac{C-2a}{y^{3/2}}\, dy 
 \ge \frac{(a^{-1}-a)(1-2a)} {[1+a^2]^2} (1-I) > 1-I
\]
because $a\in(0,\frac 3{10}]$ and $I\le 0$.  The same argument applies to $y_0\in[-1,0)$, so we see that $H(f)\le 1$ forces $|y|\alpha(y)\le C|y|^{3/2}$ and so
\beq \lb{22.2d}
0\le \sgn(y)\,\big( f(y)-(1-ay) \big) \le C|y|^{3/2}
\eeq
for all $y\in [-1,1]$.  We will make use of this estimate in the next section.

Next, \eqref{22.2b} shows that it suffices to show 
\beq \lb{22.3}
H(f) > H(a|y- a^{-1}|)
\eeq
when $f$ is not $a|y- a^{-1}|$ (recall that we assume $f\in\calF$ and $f(0)=1$).
Letting $A:={a^{-1}}-a$, we have
\[
 \partial_r h(y,r) = \frac{2r^3+3Ayr^2 -6y^2r -Ay^3 }{ (y^2+r^2)^3 } =\frac 1{y^3} \, g \left(\frac ry \right) ,
\]
where
\beq \lb{22.3a}
g(s):= \frac{2s^3+3As^2 -6s -A }{ (1+s^2)^3 }.
% = G'(s)
\eeq

We will separately study the integral in \eqref{22.2} over $y<0$ and $y>0$.
%, denoting them $H_{-a}^-(f)$ and $H_{-a}^+(f)$, respectively.  
We start with the former, which will be much simpler to minimize.  This is because at the end of this proof we will show that
\beq \lb{22.4}
  \int_{s_0-a}^{s_0-b} g(s) ds >\int_b^a g(s)ds
\eeq
for any $b\in [-a,a)$ and $s_0< 0$.  This now implies that if $y<0$ and we let $b:=\frac {1-f(y)}{y} \in [-a,a]$ and $s_0:=\frac{2}y<0$, then
\begin{align*}
y^2 & \big[ h(y,1+f(y)) - h(y,2-ay) + h(y,1-f(y)) - h(y,ay) \big]
\\ & \qquad \qquad = \frac 1y \int_{2-ay}^{1+f(y)} g\left( \frac ry \right) dr 
+ \frac 1y \int_{ay}^{1-f(y)} g\left( \frac ry \right) dr
\\ & \qquad \qquad =  \int_{s_0-a}^{s_0-b} g(s) ds 
+  \int_{a}^{b} g(s) ds \ge 0,
\end{align*}
with equality   at the end only when $f(y)=1-ay$ (i.e., $b=a$).  This  shows that $1-ay$  is the  unique minimizer of
\[
H^-(f):=  \int_{-\infty}^{0}  \big[ h(y, 1+f(y)) + h(y, 1-f(y)) \big] \, dy
\]
 among all Lipschitz $f:(-\infty,0]\to[0,\infty)$ such that $f'(0)=-a=\|f'\|_{L^\infty}$ and $f(0)=1$.  
% Note that \eqref{22.2a} and \eqref{22.2b} show
%\beq\lb{22.4a}
%H^-(a|y- a^{-1}|)= \int_{-\infty}^0  h(y, 2-ay) \, dy = \frac { 1-3a^2} 4.
%\eeq
 Recalling \eqref{22.3}, it now suffices to show that $a|y- a^{-1}|$ is the unique minimizer of
\beq \lb{22.5}
H^+(f):=  \int_0^\infty  \big[ h(y, 1+f(y)) + h(y, 1-f(y)) \big] \, dy
\eeq
on the set of all Lipschitz $f:[0,\infty)\to[0,\infty)$ such that $f'(0)=-a=\|f'\|_{L^\infty}$ and $f(0)=1$.  This is the claim of the next lemma, which we prove in the next section, so the present proof will be finished once we establish \eqref{22.4}.

To this end we note that from
\[
g'(s)= -6\, \frac {s^4+2As^3-6s^2-2As+1} {(1+s^2)^4}
\]
it follows that $g$ has at most 4 local extrema and $\sgn(s)g(s)>0$ when $|s|$ is large enough.
Since $a\in(0, \frac 3{10}]$ also implies $a^{-1}>10a$ and so
\begin{align*}
g(-a^{-1}) & =(a^{-3}+2a^{-1}+a)(1+a^{-2})^{-3} \,\,\, >0,
\\ g(-a) & = -(a^{-1}-10a+5a^3)(1+a^2)^{-3} \,\,\, <0,
\\ g(0) & =-A \,\,\, < g(-a),
\\ g(a) & = - (a^{-1}+2a +a^{3} )(1+a^{2})^{-3} \,\,\, <  g(0),
\\ g(a^{-1}) & = (5a^{-3}-10a^{-1}+a)(1+a^{-2})^{-3} \,\,\, >0,
\\ g'(a) & = 6\,\frac{1-2a+4a^2+a^4}{(1+a^2)^2} \,\,\, >0,
\end{align*}
%(we used here $a^{-1}+2a +a^{3}\ge a^{-1}+2a -2a^{5}-a^7 = A(1+a^2)^3$), 
we see that $g$ does have 4 local extrema at
\beq \lb{22.6}
s_1\in(-\infty,-a^{-1}), \qquad s_2\in(-a^{-1},-a), \qquad s_3\in(0,a), \qquad s_4\in(a,\infty),
\eeq
and these are negative local minima at $s_1,s_3$ and positive local maxima at $s_2,s_4$.  Figure \ref{F.2.3} shows $g$ when $a=\frac 3{10}$, and hence $A=3$.

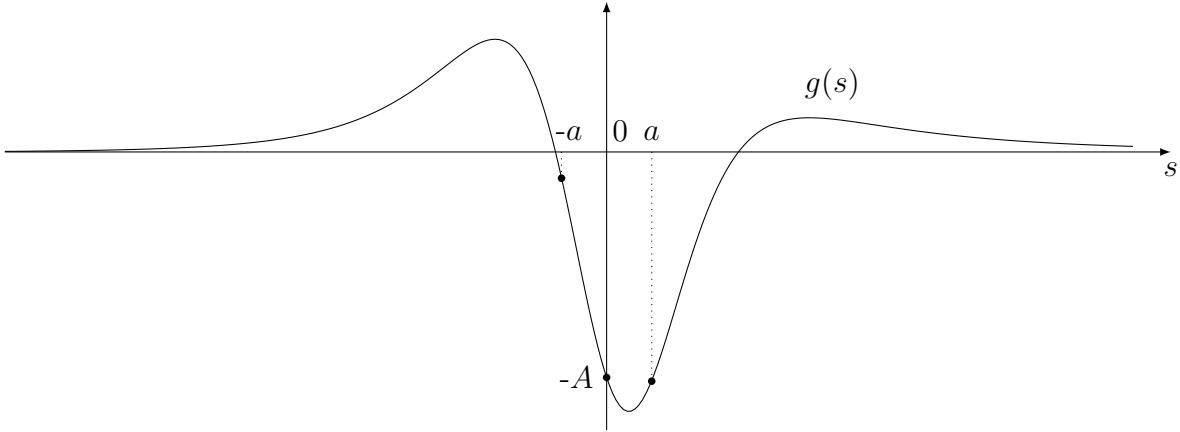
\begin{figure}
\begin{tikzpicture}
%  \draw [gray,dotted] (0,-6) grid (10,7) ;
  \draw[->,>=latex] (0,0) -- (15.5,0) node[below]{$s$} ;
  \draw[->,>=latex] (8,-3.7) -- (8,2)  ;
  \node (x) at (11,0.9) {$g(s)$};
  \draw[dotted] (8.6,0) -- (8.6,-3.05);
  \node (x) at (8.6,0.25) {$a$};
  \draw[dotted] (7.4,0) -- (7.4,-0.35);
  \node (x) at (7.5,0.25) {-$a$};
    \node (x) at (8.18,0.28) {$0$};
%  \draw (7.9,-3) -- (8.1,-3);
  \node (x) at (7.6,-3) {-$A$};
  \draw [domain=0:15,samples=300] plot (\x, {(2*((\x-8)/2)^3+9*((\x-8)/2)^2 -6*((\x-8)/2) -3)/ (1+((\x-8)/2)^2)^3} ) ;
  \fill (7.4,-0.35) circle(0.05);
  \fill (8.6,-3.05) circle(0.05);
   \fill (8,-3) circle(0.05);
\end{tikzpicture}
\caption{The function $g$ for $a=\frac 3{10}$ (and so $A=3$).} \label{F.2.3}
\end{figure}

We next note that 
\[
g(s_1) \ge  \frac{2s_1^3 -A }{ (1+s_1^2)^3 } \ge 3 \left( \frac{s_1 }{ 1+s_1^2 } \right)^3 \ge   3 \left( \frac{-a^{-1} }{ 1+a^{-2} } \right)^3  =  - \frac{3a^3 }{ (1+a^2)^3 } > g(-a),
\]
so $g(s)> g(-a)$ for all $s<-a$.  This and $g(s)<g(-a)$ for $s\in(-a,a]$ show that
\[
\int_{s_0-a}^{s_0-b} g(s) ds > (a-b) g(-a) > \int_{-a}^{-b} g(s)ds
\]
when $b\in[-a,a)$ and $s_0-b\le -a$.  We also have
\[
\frac d{ds_0}\,\int_{s_0-a}^{s_0-b} g(s) ds = g(s_0-b)-g(s_0-a) < 0
\]
when $s_0-b\in[-a,a-(b)_+]$ (i.e., $s_0\in[b-a,a-(b)_-]$) because $g(s)> g(-a)$ for all $s<-a$,  $g$ is decreasing on $[-a,0]$, and $g(s)<g(0)$ for $s\in(0,a]$.  Hence if $b\le 0$, then for all $s_0<a+b$ ($=a-(b)_-$) we have
\[
\int_{s_0-a}^{s_0-b} g(s) ds > \int_{b}^{a} g(s) ds,
\]
while if $b\ge 0$, then for all $s_0<b$ ($\le a-(b)_-$) we have
\[
\int_{s_0-a}^{s_0-b} g(s) ds > \int_{b-a}^{0} g(s) ds > (a-b)g(0) >  \int_{b}^{a} g(s) ds.
\]
In both cases, the range of $s_0$ for which the inequality holds includes all $s_0<0$, therefore we proved \eqref{22.4}.
\end{proof}

Let us next show \eqref{22.50}. Assume that $f_n\in\calF_{a,c}$ is a sequence such that $\lim_{n\to\infty} H(f_n)=0$ (and  $H(f_n)\ge 0$ for all $n$ by Lemma \ref{L.2.1}).  If $\limsup_{n\to\infty} f_n(0) = \infty$, then clearly
\[
\limsup_{n\to\infty} \int_0^\infty \ h(y, f_n(0)+f_n(y))  \, dy \ge 0,
\]
while \eqref{22.29} and \eqref{22.51} show that
\[
\inf_{n} \int_0^\infty \ h(y, f_n(0)-f_n(y))  \, dy >0.
\]
Hence $\limsup_{n\to\infty} H(f_n)>0$, a contradiction, so we have $m:=\sup_n f_n(0)<\infty $.  Since also $f_n(0)\ge \frac{a^2}{16c}$, we see that
\[
\tilde f_n(y):=\frac{f_n(f_n(0)y)}{f_n(0)} \in \calF_{a,c'}
\]
with $c':=c(\frac {a^2}{16c})^{-\gamma}$.  Now $H(\tilde f_n)=f_n(0)H(f_n)\le m \, H(f_n)\to 0$ and $\tilde f_n(0)=1$.   So the argument from the start of the proof of the next lemma, applied to $H$ and on $\bbR$ instead to $H^+$ on $\bbR^+$, shows that some subsequence $\{f_{n_k}\}$ converges to some $f\in \calF_{a,c'}$ with $f(0)=1$ such that $H(f)=0$.  But then Lemma \ref{L.2.1} shows that $f(y)\equiv a|y- a^{-1}|\notin \calF_{a,c'}$, a contradiction.

It now remains to prove the following result, which was used in the proof of Lemma \ref{L.2.1}.  We do this in the next section.

\begin{lemma} \lb{L.2.2}
For   $a\in(0,\frac 3{10}]$, let $\calF^+$ be the set of all  Lipschitz $f:[0,\infty)\to[0,\infty)$ such that $f'(0)=-a=\|f'\|_{L^\infty}$ and $f(0)=1$.  Then $a|y- a^{-1}|$ is the unique minimizer of $H^+$ 
%from \eqref{22.5} 
on  $\calF^+$.
\end{lemma}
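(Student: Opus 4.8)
The plan is to establish first that $H^+$ attains its minimum on $\calF^+$, and then that any minimizer coincides with $f_*(y):=a|y-a^{-1}|=|1-ay|$.

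\emph{Existence of a minimizer.} Let $(f_n)\subseteq\calF^+$ be a minimizing sequence, which we may take with $H^+(f_n)\le H^+(f_*)+1$. Combining this with the lower bound $\int_0^\infty h(y,1+f_n)\,dy\ge I$ from \eqref{22.2c} bounds $\int_0^\infty h(y,1-f_n(y))\,dy$; the one-sided version of the argument yielding \eqref{22.2d} (which used only bounded energy and the $y>0$ part of the integral) then gives $0\le f_n(y)-(1-ay)\le C|y|^{3/2}$ on $[0,1]$ uniformly in $n$. Since the $f_n$ are $a$-Lipschitz with $f_n(0)=1$, they are locally uniformly bounded and equicontinuous, so along a subsequence $f_n\to f_0$ locally uniformly; then $f_0$ is $a$-Lipschitz, $f_0\ge0$, $f_0(0)=1$, and $f_0(y)=1-ay+O(|y|^{3/2})$ near $0$, whence $f_0'(0)=-a$ and $f_0\in\calF^+$. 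Lower semicontinuity of $H^+$ along the subsequence follows from Fatou's lemma for the nonnegative term $\int_0^\infty h(y,1-f_n)\,dy$ (its integrand is $\ge0$ by \eqref{22.1b} and converges pointwise) and dominated convergence for $\int_0^\infty h(y,1+f_n)\,dy$ (its integrand is dominated by a fixed integrable function of $y$, using $1-ay\le f_n(y)\le1+ay$ and $f_n\ge0$). Hence $H^+(f_0)=\min_{\calF^+}H^+$.

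\emph{Identification of the minimizer.} Fix a minimizer $f_0$. Using $\partial_r h(y,r)=y^{-3}g(r/y)$ with $g$ as in \eqref{22.3a} one writes, for any $f\in\calF^+$,
\[
H^+(f)-H^+(f_*)=\int_0^\infty y^{-2}\Big[\int_{(1+f_*(y))/y}^{(1+f(y))/y}g(s)\,ds+\int_{(1-f_*(y))/y}^{(1-f(y))/y}g(s)\,ds\Big]\,dy ,
\]
and the aim is to prove this is $\ge0$, with equality only for $f\equiv f_*$. On $(0,1/a)$, where $f(y)\ge1-ay=f_*(y)$ is forced by the Lipschitz bound, the $y$-integrand reduces (after $s=w-t$, $w:=2/y$) to $\int_{u}^{a}\bigl[g(w-t)-g(t)\bigr]\,dt$ with $u=(1-f(y))/y\in[-a,a]$ and $w>2a$; this is $\ge0$ by the extrema structure of $g$ from Lemma~\ref{L.2.1}, whose verification is where $a\le\tfrac3{10}$ enters (through the sign computations in \eqref{22.6}), and it vanishes only when $u=a$, i.e.\ $f=f_*$ on $(0,1/a)$. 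One then shows that a minimizer cannot remain strictly above $1-ay$ on $(0,1/a)$ --- so $f_0\equiv1-ay$ there and $f_0(1/a)=0$ --- and that on $(1/a,\infty)$ any admissible $f_0$ with $f_0(1/a)=0$ satisfies $1-ay\le f_0(y)\le ay-1$; writing $\phi:=f_0-f_*\le0$ (nonincreasing, $\phi(1/a)=0$), the contribution of this half-line to $H^+(f_0)-H^+(f_*)$ again reduces to an integral of $\int[g(w-t)-g(t)]\,dt$, now with $w=2/y<2a$, and a global lower bound on this quantity --- valid precisely because $a\le\tfrac3{10}$ --- shows it is $\ge0$, vanishing only for $\phi\equiv0$. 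Since $f_0$ is a minimizer, both contributions vanish, so $f_0=f_*$; as this holds for every minimizer, $f_*$ is the unique minimizer. (The estimate \eqref{22.2d} also guarantees integrability of the integrands near $y=0$ and justifies the manipulations.)

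\emph{Main obstacle.} The crux is the half-line $(1/a,\infty)$: in contrast with the $y<0$ analysis in Lemma~\ref{L.2.1} and the $y\in(0,1/a)$ case above, there is \emph{no pointwise-in-$y$ inequality}, because the Lipschitz constraint couples the value of $f$ near $1/a$ (where $f\equiv0$ is locally favorable, as $r\mapsto h(y,r)$ near $r=1$ shows) to the size of $f$ for large $y$ (where $h(y,1+f)$ favors large $f$), so one must run a genuinely global estimate tied to the exact shape of $g$ --- its four critical points and the comparison inequalities from Lemma~\ref{L.2.1} --- and it is the quantitative margins there that force the hypothesis $a\le\tfrac3{10}$ rather than merely $a\le1$.
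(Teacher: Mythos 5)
Your existence-of-a-minimizer step is sound and is essentially the paper's own argument (Lipschitz compactness plus the bound \eqref{22.2d}, tail control and semicontinuity), so I focus on the identification step, where there is a genuine gap sitting exactly at the hard part of the lemma. Write $f_*(y):=a|y-a^{-1}|$. First, the inequality you invoke on $(0,a^{-1})$, namely $\int_u^a[g(w-t)-g(t)]\,dt\ge0$ for all $u\in[-a,a]$ and $w=2/y>2a$, is not contained in Lemma \ref{L.2.1}: the comparison \eqref{22.4} proved there is only for $s_0<0$, and its proof rests on the fact that $g(s)>g(-a)$ for every $s<-a$, which has no analogue to the right of $a$ (just past $a$ one has $g(s)<g(0)<g(-a)$, since $g(a)<g(0)$), so ``the extrema structure of $g$'' does not yield your claim; it would need its own quantitative proof in which $a\le\frac3{10}$ enters. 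Second, and decisively, the half-line $(a^{-1},\infty)$ is dispatched by the single sentence that ``a global lower bound on this quantity --- valid precisely because $a\le\frac3{10}$ --- shows it is $\ge0$, vanishing only for $\phi\equiv0$''. That assertion is the core of the lemma and no argument is offered for it. It cannot be a soft bound: for the admissible competitors $f_\delta(y)=\max\{1-ay,\,0,\,a(y-a^{-1}-\delta)\}$ (the kink slid to the right), which agree with $f_*$ on $[0,a^{-1}]$ and have $\phi\not\equiv0$, the first-order change of $H^+$ is $a\delta$ times the integral in \eqref{3.9}, which the paper computes to be exactly $0$; moreover the $y$-integrand there genuinely changes sign (cf. the discussion after \eqref{3.8}). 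So your claimed strict positivity on $(a^{-1},\infty)$ is a degenerate, second-order statement, and establishing it is precisely what the paper's machinery is for: the Euler--Lagrange conditions of Lemmas \ref{L.3.1}--\ref{L.3.2} (including the $g''$, $g'''$ estimates behind \eqref{3.4b}), the explicit antiderivatives $G^\pm$ giving \eqref{3.6} and \eqref{3.9}, and the separate treatment, via the concavity argument \eqref{3.10}, of candidates vanishing identically past $a^{-1}$. Nothing in your write-up replaces this.

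There is also a structural flaw in how you decouple the two half-lines. You conclude $f_0\equiv1-ay$ on $(0,a^{-1})$, hence $f_0(a^{-1})=0$, before analyzing $(a^{-1},\infty)$; but nonnegativity of the $(0,a^{-1})$ contribution alone does not give this, since a minimizer could a priori pay a positive price on $(0,a^{-1})$ in exchange for a negative contribution on $(a^{-1},\infty)$, and your bound on the latter region is only formulated for functions with $f_0(a^{-1})=0$. The global inequality you need must therefore cover arbitrary admissible behavior at $a^{-1}$, or the two regions must be treated jointly, as the paper does through its case analysis of an actual minimizer. Note finally that if your two comparison inequalities did hold for all $f\in\calF^+$ with the stated equality cases, the existence step would be superfluous --- a sign that the direct-comparison route has not in fact been carried out.
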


%\[
%G(s):= \frac {1-s^2-As}{(1+s^2)^2}.
%\]
%Note also that 
%\[
%(2s^3+3As^2 -6s -A)'=6(s^2+As-1)=0 \qquad\text{iff $s\in\{a,-a^{-1}\}$}
%\]

%%%%%%%%%%%%%%%%%%%%%%%%%%%%%%%%%%%%%%%%%%%%%%
\section{Proof of Lemma \ref{L.2.2}} \lb{S3}
%Start of the Proof of Theorem \ref{T.1.1} and 
%%%%%%%%%%%%%%%%%%%%%%%%%%%%%%%%%%%%%%%%%%%%%%

%We first note that a minimizer of $H^+$ on  $\calF^+$ does exist.  This is because the Arzel\` a-Ascoli Theorem shows that a minimizer of 
%\[
%H^+_N(f):=\int_0^N  \big[ h(y, 1+f(y)) + h(y, 1-f(y)) \big] \, dy
%\]
%exists on  $\calF^+$, and so ???

From \eqref{22.2c} and \eqref{22.1b} we again see that 
\[
J:=\inf_{f\in\calF^+} H^+(f)\ge I
\]
and \eqref{22.2d} holds for all $y\in[0,1]$ whenever $H^+(f)\le J+1$.
Let $f_n\in\calF^+$ be such that  $H^+(f_n)\le J+\frac 1n$.  The Arzel\` a-Ascoli Theorem and \eqref{22.2d} 
%(which equally applies to $H^+$ and so holds for each $f_n$ and all $y\in[0,1]$) 
show that there is a subsequence $\{f_{n_k}\}$ that converges  to some $f\in\calF^+$ in $L^\infty_{\rm loc}([0,\infty))$.
Since for all $\tilde f\in\calF^+$ we have
\[
 \int_M^\infty  \big| h(y, 1+\tilde f(y)) + h(y, 1-\tilde f(y)) \big| \, dy \le \int_M^\infty \frac{y^2 + (2+ay)^2 + a^{-1} y(2+ay) }{ y^4 } \, dy \to 0
\]
as $M\to\infty$, it follows that for each $\sigma\in(0,a)$ we have
\[
I_{n_k,\sigma}:= \int_\sigma^\infty   \big[ h(y, 1+ f_{n_k}(y)) + h(y, 1- f_{n_k}(y)) \big]  dy \to \int_\sigma^\infty \big[  h(y, 1+ f(y)) + h(y, 1- f(y)) \big] dy
 \]
 as $k\to\infty$.  Since $I_{n_k,\sigma}\le H^+(f_{n_k})+7\sigma\le J+\frac 1{n_k}+7\sigma$ for all $k$, which holds by \eqref{22.1b} and $h(y, 1+ f_n(y))\ge -7$ for all $(y,n)\in[0,a]\times\bbN$, we see that
 \[
 \int_\sigma^\infty  \big[ h(y, 1+ f(y)) + h(y, 1- f(y))  \big] dy \le J +7\sigma
 \]
 for each $\sigma\in(0,a)$.  Hence $H^+(f)\le J$, which means that $H^+$ has a minimizer in $\calF^+$.

Consider now any such minimizer $f$, and for $0< p<q$ and $\delta>0$ let
\[
f_{p,q,\delta}^+(s):=
\begin{cases}
f(s) & s\notin(p,q), \\
\min \{ f(s)+\delta, f(p)+a(s-p), f(q)+a (q-s)  \} & s\in(p,q)
\end{cases}
\]
and 
\[
f_{p,q,\delta}^-(s):=
\begin{cases}
f(s) & s\notin(p,q), \\
\max \{ f(s)-\delta, f(p)-a(s-p), f(q)-a (q-s)  \} & s\in(p,q).
\end{cases}
\]
Then clearly $f_{p,q,\delta}^+\in\calF^+$, while $f_{p,q,\delta}^-\in\calF^+$ as long as $f\ge\delta$ on $[p,q]$, and we let
(with every statement below that includes $\pm$ being two separate statements)
\begin{align*}
p_{0^+}^\pm & := \inf \big\{s\in(p,q)\,\big|\, f'\not\equiv \pm a \text{ on $(p,s)$} \big\},
\\ p_{\delta^+}^\pm & := \inf \left\{ s\in(p,q) \,\bigg|\, \int_p^s (a\mp f'(s)) ds =\delta \right\} \quad \searrow p_{0^+}^\pm \text{ as $\delta\to 0^+$},
\\ q_{0^-}^\pm & := \sup \big\{s\in(p,q)\,\big|\, f'\not\equiv \mp a \text{ on $(s,q)$} \big\},
\\ q_{\delta^-}^\pm & := \sup \left\{ s\in(p,q) \,\bigg|\, \int_s^q (a\pm f'(s))ds =\delta \right\}  \quad \nearrow q_{0^-}^\pm \text{ as $\delta\to 0^+$}
\end{align*}
(with $\inf\emptyset :=\infty$ and $\sup\emptyset :=-\infty$). Then for each $\delta>0$ we have 
\begin{align*}
f_{p,q,\delta}^\pm & \equiv f \qquad \qquad \text{on $(-\infty,p_{0^+}^\pm]\cup [q_{0^-}^\pm,\infty)$}, 
\\ f_{p,q,\delta}^\pm & \equiv f\pm\delta \,\, \qquad
\text{on $[p_{\delta^+}^\pm,q_{\delta^-}^\pm]$ if $p_{\delta^+}^\pm\le q_{\delta^-}^\pm$}. 
\end{align*}
Since $f$ minimizes $H^+$, it follows that whenever $p_{0^+}^\pm < q_{0^-}^\pm$ (as well as $\min_{s\in[p,q]} f(s)>0$ in the ``$-$'' case), we have
\begin{align*}
0\le \lim_{\delta\to 0^+} \frac {H^+(f_{p,q,\delta}^\pm) - H^+(f)}\delta 
& = \pm \int_{p_{0^+}^\pm}^{q_{0^-}^\pm} \big[ h_r(y, 1+f(y)) - h_r(y, 1-f(y)) \big] \, dy
\\ & =\pm \int_{p_{0^+}^\pm}^{q_{0^-}^\pm} \frac 1{y^3} \left[ g \left(\frac{1+f(y)} y \right) - g \left(\frac{1- f(y)} y \right) \right] \, dy.
\end{align*}
This shows that if $f'\not\equiv \pm a$ on $(p,p+\eps)$ and $f'\not\equiv \mp a$ on $(q-\eps,q)$ for all $\eps>0$, and also $\min_{s\in[p,q]} f(s)>0$ in the ``lower sign'' case, then
\beq \lb{3.1}
\pm \int_{p}^{q} \frac 1{y^3} \left[ g \left(\frac{1+f(y)} y \right) - g \left(\frac{1- f(y)} y \right) \right] \, dy \ge 0 .
\eeq

Assume that $f'\not\equiv a$ and $f'\not\equiv -a$ on $(p,p+\eps)$  for all $\eps>0$.  Then there are $q_n\searrow p$ such that $f'\not\equiv - a$ on $(q_n-\eps,q_n)$ for all $\eps>0$, so \eqref{3.1} yields
\[
0\le \frac{p^3}{q_n-p} \, \int_{p}^{q_n} \frac 1{y^3} \left[ g \left(\frac{1+f(y)} y \right) - g \left(\frac{1- f(y)} y \right) \right] \, dy
 \to g \left(\frac{1+f(p)} p \right) - g \left(\frac{1- f(p)} p \right).
\]
There are also $q_n'\searrow p$ such that $f'\not\equiv  a$ on $(q_n'-\eps,q_n')$ for all $\eps>0$, so \eqref{3.1}  again yields
\[
0\ge \frac{p^3}{q_n'-p} \, \int_{p}^{q_n'} \frac 1{y^3} \left[ g \left(\frac{1+f(y)} y \right) - g \left(\frac{1- f(y)} y \right) \right] \, dy
 \to g \left(\frac{1+f(p)} p \right) - g \left(\frac{1- f(p)} p \right),
\]
provided $f(p)>0$.
This will yield part (i) of the following result.

\begin{lemma} \lb{L.3.1}
Let $f$ be a minimizer of $H^+$ on $\calF^+$ and $p\in(0,\infty)$.

(i) 
If $f'\not\equiv a$ and $f'\not\equiv -a$ on $(p,p+\eps)$  for all $\eps>0$, then
\beq \lb{3.2}
g \left(\frac{1+f(p)} p \right) = g \left(\frac{1- f(p)} p \right).
\eeq

(ii)
If $f'\not\equiv a$ and $f'\not\equiv -a$ on $(p,p-\eps)$  for all $\eps>0$, then \eqref{3.2} holds.  

(iii)
If $f'\equiv - a$ on $(p-\eps,p)$ and $f'\equiv a$ on $(p,p+\eps)$  for some $\eps>0$, then
\beq \lb{3.3}
g \left(\frac{1+f(p)} p \right) \ge g \left(\frac{1- f(p)} p \right).
\eeq

(iv)
If $f'\equiv a$ on $(p-\eps,p)$ and $f'\equiv -a$ on $(p,p+\eps)$  for some $\eps>0$, then
\[
g \left(\frac{1+f(p)} p \right) \le g \left(\frac{1- f(p)} p \right).
\]
\end{lemma}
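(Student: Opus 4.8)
\emph{Proof proposal for Lemma~\ref{L.3.1}.}
My plan is to feed the variational inequalities available for the minimizer $f$ --- namely \eqref{3.1} and the more general identity established just before the lemma --- into a limiting argument in which the perturbation interval collapses onto the single point $p$. Throughout, $f$ is the fixed minimizer of $H^+$ on $\calF^+$, so $f(0)=1$ and the arguments of $h$ are $1\pm f(y)$. The relevant quantity is
\[
\Phi(y):=h_r\bigl(y,1+f(y)\bigr)-h_r\bigl(y,1-f(y)\bigr)=\frac1{y^3}\left[g\!\left(\tfrac{1+f(y)}y\right)-g\!\left(\tfrac{1-f(y)}y\right)\right],
\]
which is continuous on $(0,\infty)$ since $g$ and $f$ are continuous; hence $\frac1{q-p}\int_p^q\Phi(y)\,dy\to\Phi(p)$ as $q\to p$ from either side, and (since $p>0$) $\Phi(p)$ has the sign of $g\bigl(\tfrac{1+f(p)}p\bigr)-g\bigl(\tfrac{1-f(p)}p\bigr)$ and vanishes exactly when \eqref{3.2} holds. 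So the whole lemma reduces to pinning down the sign of $\Phi(p)$.

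For (i) and (ii) I would first dispose of the trivial case $f(p)=0$, where \eqref{3.2} reads $g(1/p)=g(1/p)$. Assuming $f(p)>0$, I would, in case (i), use the hypothesis ($f'\not\equiv a$ and $f'\not\equiv-a$ on $(p,p+\eps)$ for every $\eps>0$) to produce sequences $q_n\searrow p$ with $f'\not\equiv-a$ on every $(q_n-\eps,q_n)$, and $q_n'\searrow p$ with $f'\not\equiv a$ on every $(q_n'-\eps,q_n')$: if no such $q_n$ existed, every point of some $(p,p+\eta)$ would have a left-neighbourhood on which $f'$ equals $-a$ a.e., and a Lebesgue-density argument would then force $f'\equiv-a$ a.e.\ on $(p,p+\eta)$, contradicting the hypothesis (and the same with $a$ in place of $-a$). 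Then $\min_{[p,q_n']}f>0$ for large $n$, so \eqref{3.1} applies with the upper sign on $(p,q_n)$ and the lower sign on $(p,q_n')$; dividing by $q_n-p$ resp.\ $q_n'-p$ and letting $n\to\infty$ yields $\Phi(p)\ge0$ and $\Phi(p)\le0$, hence \eqref{3.2}. Part (ii) is the mirror image: the two-sided hypothesis at $p$ from the left yields $\tilde p_n\nearrow p$ with $f'\not\equiv a$ on every right-neighbourhood of $\tilde p_n$ and $\tilde p_n'\nearrow p$ with $f'\not\equiv-a$ on every right-neighbourhood of $\tilde p_n'$, and I would apply \eqref{3.1} with $p$ serving as the \emph{right} endpoint of the perturbation interval.

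For (iii) and (iv) the two-sided hypotheses let me apply the basic variational identity $0\le\lim_{\delta\to0^+}\delta^{-1}\bigl[H^+(f_{p,q,\delta}^\pm)-H^+(f)\bigr]=\pm\int_{p_{0^+}^\pm}^{q_{0^-}^\pm}\Phi(y)\,dy$ directly on an interval straddling $p$. In (iii), I would fix $0<\alpha<p<\beta$ with $f'\equiv-a$ on $(\alpha,p)$ and $f'\equiv a$ on $(p,\beta)$ and take the upward perturbation $f_{\alpha,\beta,\delta}^+\in\calF^+$. Since $f'\not\equiv a$ on every $(\alpha,s)$ and $f'\not\equiv-a$ on every $(s,\beta)$ for $s\in(\alpha,\beta)$, the setup endpoints (with $p,q$ there replaced by $\alpha,\beta$) are $\alpha_{0^+}^+=\alpha$ and $\beta_{0^-}^+=\beta$, so the identity gives $0\le\int_\alpha^\beta\Phi(y)\,dy$; dividing by $\beta-\alpha$ and sending $\alpha\to p^-,\ \beta\to p^+$ yields $\Phi(p)\ge0$, i.e.\ \eqref{3.3}. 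In (iv), $f'\equiv a$ just left of $p$ makes $f$ strictly increasing there, forcing $f(p)>0$; hence $\min_{[\alpha,\beta]}f>0$ for $\alpha,\beta$ near $p$, so the downward perturbation $f_{\alpha,\beta,\delta}^-\in\calF^+$ for all small $\delta$, and the same computation with the lower sign (which carries an overall $-$) gives $0\le-\int_\alpha^\beta\Phi(y)\,dy$, hence $\Phi(p)\le0$.

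I expect the main obstacles to be: (a) the density/connectedness argument producing the one-sided non-degenerate sequences $q_n,q_n',\tilde p_n,\tilde p_n'$ in (i)--(ii), which is routine but genuinely needed since $f'$ is only controlled in $L^\infty$; and (b) in (iii)--(iv), the bookkeeping that identifies the active-region endpoints of the perturbations with $\alpha$ and $\beta$, together with the verification that $f_{\alpha,\beta,\delta}^\pm\in\calF^+$ --- in particular that the \emph{downward} perturbation in (iv) is admissible, which hinges on the observation that a one-sided slope $a$ immediately to the left of $p$ forces $f(p)>0$. Everything else is the bounded-interval averaging already carried out in the text, justified by continuity of $\Phi$ on $(0,\infty)$ and the bound $|h_r(y,r)|\le\|g\|_{L^\infty(\bbR)}\,y^{-3}$, which gives uniform control of $h_r$ on compact subsets of $(0,\infty)$.
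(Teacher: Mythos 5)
Your proposal is correct and follows essentially the same route as the paper: the variational inequality \eqref{3.1} obtained from the perturbations $f_{p,q,\delta}^\pm$, applied on one-sided intervals shrinking to $p$ via sequences $q_n,q_n'$ (resp.\ their mirror images) for (i)--(ii), and on a symmetric interval $(p-\eps,p+\eps)$ shrinking to $p$ for (iii)--(iv), with the averages converging to the sign of $g\bigl(\tfrac{1+f(p)}p\bigr)-g\bigl(\tfrac{1-f(p)}p\bigr)$ by continuity. The extra details you supply (the density/connectedness argument producing the sequences, and the observation that $f'\equiv a$ to the left of $p$ forces $f(p)>0$ so the downward perturbation in (iv) is admissible) are correct fillings-in of steps the paper leaves implicit.
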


\begin{proof}
We proved (i) when $f(p)>0$, but it obviously also holds when $f(p)=0$.  We can obtain (ii) via a similar argument (with $q_n\nearrow p$).  In (iii) we see that \eqref{3.1} holds with $\pm$ being $+$ and with $(p-\eps,p+\eps)$ in place of $(p,q)$ whenever $\eps>0$ is small enough, so multiplying it by $\frac{p^3}{2\eps}$ and taking $\eps\to 0$ yields \eqref{3.3}.
Finally, (iv) is obtained similarly when $f(p)>0$, via \eqref{3.1} with $\pm$ being $-$, while it holds trivially when $f(p)=0$.
\end{proof}

Consider the open sets
\[
P_\pm:= \left\{ y>0 \, \bigg|\, \pm g \left(\frac{1+f(y)} y \right) > \pm g \left(\frac{1- f(y)} y \right) \right\} 
%\subseteq(0,\infty)
\]
and the set
\[
P_0:= \left\{ y>0 \, \bigg|\, g \left(\frac{1+f(y)} y \right) = g \left(\frac{1- f(y)} y \right) \right\} .
%= (\partial P_+\cup \partial P_-)\setminus \{0\}.
\]
Lemma \ref{L.3.1} is now clearly equivalent to 
(i) and (ii) in 
the following result.  In it we also let $s^\pm_y:= \frac{1\pm f(y)} y$, which satisfy
\beq \lb{3.4a}
-a\le s^-_y\le \min \left\{a,\frac 1y \right\} \le \frac 1y \le s^+_y.
\eeq

\begin{lemma} \lb{L.3.2}
(i)
For each $y\in P_+$ there is $\eps>0$ such that either $f'\equiv -a$  on $(y-\eps,y+\eps)$,  or $f'\equiv a$ on $(y-\eps,y+\eps)$, or $f'\equiv -a$  on $(y-\eps,y)$ and $f'\equiv a$  on $(y,y+\eps)$.

(ii)
For each $y\in P_-$ there is $\eps>0$ such that either $f'\equiv a$  on $(y-\eps,y+\eps)$,  or $f'\equiv -a$ on $(y-\eps,y+\eps)$, or $f'\equiv a$  on $(y-\eps,y)$ and $f'\equiv -a$  on $(y,y+\eps)$.

(iii)
If $\frac{1+f(y_0)}{y_0}\le a$ for some $y_0\in P_0$, then with $y_1:=\inf \{y\ge y_0\,|\, f(y)=0\}$ we have $(y_0,y_1)\in P_-$.
\end{lemma}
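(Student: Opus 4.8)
These are Lemma~\ref{L.3.1} rephrased, as the remark before the statement notes. Let $y\in P_+$, so $g(s^+_y)\neq g(s^-_y)$. Lemma~\ref{L.3.1}(i) produces $\eps_1>0$ with $f'\equiv a$ or $f'\equiv -a$ on $(y,y+\eps_1)$, and Lemma~\ref{L.3.1}(ii) produces $\eps_2>0$ with $f'\equiv a$ or $f'\equiv -a$ on $(y-\eps_2,y)$; put $\eps:=\min\{\eps_1,\eps_2\}$. Of the four sign combinations, ``$f'\equiv a$ on $(y-\eps,y)$ and $f'\equiv -a$ on $(y,y+\eps)$'' cannot occur, since it is the hypothesis of Lemma~\ref{L.3.1}(iv) and would give $g(s^+_y)\le g(s^-_y)$; the remaining three are exactly the alternatives in (i) (two equal signs merge into $(y-\eps,y+\eps)$ upon integrating $f'$, as $f$ is Lipschitz). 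Part (ii) is identical, with Lemma~\ref{L.3.1}(iv) replaced by Lemma~\ref{L.3.1}(iii).

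\textbf{Part (iii): setup.} We may assume $f(y_0)>0$, for otherwise $y_1=y_0$ and $(y_0,y_1)=\emptyset$. From $s^+_{y_0}\le a$ and $s^+_{y_0}\ge\tfrac1{y_0}$ (see \eqref{3.4a}) we get $y_0\ge a^{-1}$. Recall from the proof of Lemma~\ref{L.2.1} that $g$ is strictly decreasing on $[-a,s_3]$, strictly increasing on $[s_3,a]$, with $g(s)<g(0)=-A$ for $s\in(0,a]$ and $g(s)\ge -A$ for $s\in[-a,0]$. Since $s^+_{y_0}\in(0,a]$ we get $g(s^+_{y_0})<-A$, and since $y_0\in P_0$ also $g(s^-_{y_0})=g(s^+_{y_0})<-A$; this forces $s^-_{y_0}\notin[-a,0]$, hence $s^-_{y_0}\in(0,a]$, and as $f(y_0)>0$ gives $s^-_{y_0}<s^+_{y_0}$, the shape of $g$ on $(0,a]$ forces
\[
0<s^-_{y_0}<s_3<s^+_{y_0}\le a .
\]

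\textbf{Part (iii): persistence of $s^+_y\le a$, and reduction.} On every compact subinterval of $[y_0,y_1)$ the function $f$ is Lipschitz and $y$ is bounded away from $0$, so $s^+_y$ is Lipschitz with $\tfrac{d}{dy}s^+_y=\tfrac{f'(y)-s^+_y}{y}$ a.e.; wherever $s^+_y=a$ this is $\le 0$ because $f'\le a$, so a standard first-crossing argument gives $s^+_y\le a$ throughout $[y_0,y_1)$. Since $f>0$ on $(y_0,y_1)$ we have $-a\le s^-_y<s^+_y\le a$ there (the outer bounds from \eqref{3.4a} and the line above). Hence, if some $y\in(y_0,y_1)$ lies outside $P_-$, then $g(s^+_y)\ge g(s^-_y)$ and moreover $s^+_y>s_3$ --- otherwise both $s^\pm_y\in[-a,s_3]$, where $g$ is strictly decreasing, forcing $g(s^+_y)<g(s^-_y)$. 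It therefore suffices to show $g(s^+_y)<g(s^-_y)$ for all $y\in(y_0,y_1)$; this is exactly $(y_0,y_1)\subseteq P_-$.

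\textbf{Part (iii): conclusion and main obstacle.} I would argue by a first-exit principle. If $E:=\{y\in(y_0,y_1)\,|\, g(s^+_y)\ge g(s^-_y)\}$ were nonempty it would be relatively closed, so $\bar y:=\inf E$ exists in $[y_0,y_1)$; continuity forces $g(s^+_{\bar y})\le g(s^-_{\bar y})$ when $\bar y>y_0$, so $\bar y\notin P_+$ and thus $\bar y\in P_0$, and since $f>0$ on $(y_0,y_1)\ni\bar y$ this $\bar y$ again satisfies all conclusions of the setup paragraph with the same $y_1$. It then remains to show that $(y_0,y_1)$ meets neither $P_+$ nor $P_0$. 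For $P_+$: a point $y'\in(y_0,y_1)\cap P_+$ has $s^+_{y'}\in(s_3,a]$ and, by part (i), a piecewise-linear structure $f'\equiv\pm a$ near $y'$ (with at most a downward kink); propagating this leftward, using $s^+_y\le a$ and the shape of $g$, should be incompatible with $y_0\in P_0$ and $s^+_{y_0}\le a$. For $P_0$: near any admissible free point one has $s^-_y\in(0,s_3)$ and $s^+_y\in(s_3,a]$, so $g'(s^-_y)<0<g'(s^+_y)$, and one wants $\tfrac{d}{dy}\big(g(s^+_y)-g(s^-_y)\big)<0$ a.e.\ there; the pointwise bound $|f'|\le a$ is too weak when $s^+_y\in(s_3,a)$, so one must instead extract the sign from the variational inequalities \eqref{3.1} (for the competitor $f^+_{p,q,\delta}$) and their bump-down counterpart --- valid since $f>0$ near the point --- combined with Lemma~\ref{L.3.1}(iii)--(iv) and parts (i)--(ii). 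I expect this last analysis to be the main obstacle: at a free point the derivative $f'$ itself need not be comparable to $\pm a$, so the sign of the competitor's variation has to be controlled more carefully than a pointwise estimate allows.
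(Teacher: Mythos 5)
Parts (i) and (ii) are fine and match the paper, which indeed obtains them as a direct restatement of Lemma \ref{L.3.1}; your setup and reduction for (iii) (WLOG $f(y_0)>0$, the ordering $0<s'\le s^-_{y_0}<s_3<s^+_{y_0}\le a$, persistence of $s^+_y\le a$ on $[y_0,y_1)$, and the first-crossing scheme reducing everything to a sign condition at points of $P_0$) also agree with the paper.

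However, for part (iii) you stop exactly at the decisive step and leave it as an acknowledged obstacle, so the proposal has a genuine gap. Moreover, your diagnosis of that step is off: you assert that the pointwise bound $|f'|\le a$ is ``too weak'' to give $\tfrac{d}{dy}\bigl(g(s^+_y)-g(s^-_y)\bigr)<0$ at points of $P_0$, and propose (without carrying it out) to replace it by a variational argument using \eqref{3.1}. In fact the paper closes the argument precisely with the pointwise bound: from \eqref{3.4c} it suffices to prove the inequality \eqref{3.4b}, namely $r\,g'(r)+q\,|g'(q)|>a\,|g'(r)+g'(q)|$ whenever $s'\le q<r\le a$ and $g(q)=g(r)$, since then the $f'(y)$ term in \eqref{3.4c} is dominated by the first (negative) term. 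The content of \eqref{3.4b} is a quantitative near-symmetry of $g$ about its local minimum $s_3$ on $[s',a]$: by controlling $g''$ and $g'''$ one shows $g''$ stays in $[7.5A-1,\,12A+10]$ on $[0,a]$, whence matched-level points satisfy $|g'(q)|/|g'(r)|\le \tfrac32$ and $q,r\ge \tfrac{2a}{5}$, which gives $r g'(r)+q|g'(q)|\ge \tfrac{2a}{5}g'(r)>\tfrac a3 g'(r)\ge a|g'(r)+g'(q)|$. Once this derivative sign at every $y\in[y_0,y_1)\cap P_0$ is available, your first-exit argument finishes the proof, and the separate ``rule out $P_+$ by propagating the piecewise-linear structure leftward'' step you sketch is neither carried out nor needed. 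As it stands, the proof of (iii) is incomplete because this key estimate (or any substitute for it) is missing.
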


\begin{proof}
It remains to prove (iii), so assume that $y_0$ is as above, and also that $f(y_0)>0$ (otherwise the result is trivial).  First note that $s^+_{y_0}\le a$ implies $s^+_{y}\le a$ for all $y\ge y_0$.  Then $y_0\in P_0$ and $f(y_0)>0$ show that if $s'\in(0,s_3)$ is the unique number with $g(s')=g(a)$ (recall that $g(0)>g(a)$), then
\[
0< s'\le s^-_{y_0} <s_3< s^+_{y_0}\le a
\]
(and in particular, $g'(s^-_{y_0})<0<g'(s^+_{y_0})$).  
We have
\beq \lb{3.4c}
y \, \frac d{dy} \left( g(s^+_y) - g(s^-_y) \right) = - \left(s^+_{y}  g' ( s^+_{y} ) + s^-_{y}  |g' ( s^-_{y} )| \right) + f'(y) \left( g' ( s^+_{y} )+g' ( s^-_{y} ) \right) 
\eeq
for all $y>0$, and we also claim that
\beq\lb{3.4b}
r  g' ( r ) + q  |g' ( q )| > a \left| g' ( r )+g' ( q ) \right|
\eeq
whenever $s'\le q<r\le a$ and $g(r)=g(q)$.
From this and \eqref{3.4c}, and $g ( s^+_{y_0} )=g ( s^-_{y_0} )$ we see that $\frac d{dy} \left( g(s^+_y) - g(s^-_y) \right)<0$ holds for $y=y_0$ as well as for any other $y\in[y_0,y_1)\cap P_0$.  Therefore we must have $(y_0,y_1)\in P_-$, and it remains to prove \eqref{3.4b} assuming that $s'\le q<r\le a$ and $g(r)=g(q)$.
%, where $y_1:=\inf \{y\ge y_0\,|\, f(y)=0\}$.  But then  we must have $s^-_{y_1} \le s_3$.
%
%It remains to prove (iii), so assume that $(p,q)\subseteq P_0$ and $f>0$ on $(p,q)$.  Then $s^\pm_y:= \frac{1\pm f(y)} y$ satisfy $s^-_y<s^+_y$ on $(p,q)$.  Since 
%\[
%-a\le s^-_y\le \min \left\{a,\frac 1y \right\} \le \frac 1y \le s^+_y
%\]
% for all $y\in (p,q)$, from the properties of $g$ (i.e., $g(a)<g(-a)<0=\lim_{s\to\infty}g(s)$ and \eqref{22.6}) we see that 
% $g ( s^-_y )=g ( s^+_y )$ forces $g(s^\pm_y)\le g(-a)$ and  
% \[
% -a\le s^-_y <s_3< s^+_y.
% \]
% Then also $g'(s^-_y)<0<g'(s^+_y)$, and if we also assume that $s^+_y\le a$ on $(p,q)$, then $g(0)>g(a)$ implies that $s^-_y>0$ on $(p,q)$.  From $g ( s^-_y )=g ( s^+_y )$ we obtain
%\[
%(f'(y)y-f(y)-1)  \, g' ( s^+_y )   =  (f(y)-f'(y)y-1) \, g'( s^-_y )
%\]
%on $(p,q)$, so
%\beq \lb{3.4}
%f'(y) = \frac {s^+_y  g' ( s^+_y ) - s^-_y  g' ( s^-_y )} {g' ( s^+_y )+g' ( s^-_y )}
%\eeq
%there.  But then $g'(s^-_y)<0<g'(s^+_y)$ and $0<s^-_y<s^+_y$
%\[
%|f'(y)| \ge  \frac {|\,s^+_y  |g' ( s^+_y )| - |s^-_y|\,| g' ( s^-_y )|\,|} {|\,|g' ( s^+_y )|-|g' ( s^-_y )|\,|}
%\]
%for all $y\in (p,q)$.

If $g''$ were constant on $[0,a]$, we would have $g' ( r )=-g' ( q )\neq 0$ and \eqref{3.4b} would follow trivially.  We therefore want to show that $g''$ does not vary too much on $[0,a]$.  We note that this can be done easily for all small enough $a>0$, but we provide here a quantitative argument that applies to all $a\in(0,\frac3{10}]$. Since
\begin{align*}
\frac{g''(s)}{12} & = \frac { A+10s-10As^2 -20s^3 +5As^4+2s^5}{(1+s^2)^5},
\\ \frac{g'''(s)}{120} & = \frac { 1-3As-15s^2 +10As^3+15s^4-3As^5 -s^6}{(1+s^2)^6},
\end{align*}
and $a\le\frac 3{10}$, we see that $g''$ increases on $[0,s'']$ and decreases on $[s''',a]$ for some $s''<s'''$ and both close to $\frac a3$ (because the second numerator above is approximately $1-3a^{-1}s$).  In fact, a simple analysis of the three parentheses in
\begin{align*}
3as -15s^2 & +10As^3+15s^4-3As^5 -s^6 
\\ &= (3as-15s^2+10a^{-1}s^3)  -( 10as-15s^2 +3a^{-1}s^3)  s^2 + (3as-s^2)s^4
\end{align*}
shows that when $s\in[0,a]$, they take values in $[-2a^2,\frac {a^2}5]$, $[-2a^2,2a^2]$, and $[0,2a^2]$, respectively.  Thus the absolute value of the whole expression is bounded by $2a^2(1+a^2+a^4) \le \frac{11a^2}5$, which
 means that for  $s\in[0,a]$ we have $\sgn(s-\frac a3)g'''(s)\ge 0$ when $ 3a^{-1} |s-\frac a3|\ge \frac{11a^2}5$.  This holds when $|s-\frac a3|\ge \frac a{15}$ because $a^2\le\frac 9{100}$, so we can pick $s'':=\frac{a}3-\frac a{15}$ and $s''':=\frac{a}3+\frac a{15}$ above.

Then $|g'''(s)|\le 120(3a^{-1}  \frac a{15} + \frac{11a^2}5)\le 48$ on $[s'',s''']$, so $|g''(s)-g''(\frac a3)| \le \frac {16a}5\le 1$ there.  Hence $g''(s)$ takes values in $[g''(0), g''(\frac a3)+1]$ on $[0,s_3]$ and in $[g''(a),g''(\frac a3)+1]$ on $[s_3,a]$.  

A simple estimate thus yields
\[
7.5A-1\le 12 \left( A-\frac 3{20} \right) (1+a^2)^{-5} \le g''(s) \le 12(A+0.7)+1 \le  12A+10
\]
for $s\in[0,a]$.  This shows that if $g(r)=g(q)$ for some  $s'\le q<r\le a$, then 
\[
\max\left\{  \frac{|q-s_3|}{|r-s_3|} ,   \frac{|g'(q)|}{|g'(r)|} \right\}  \le \sqrt{\frac {12A+10}{7.5A-1}} \le \frac 32
%\frac 32
\]
because $A\ge 3$.  Then $s_3\ge s'+\frac {2}5 (a-s')\ge \frac {2a}5$ and
\[
r  g' ( r ) + q  |g' ( q )| \ge \frac{2ag' ( r )}5 > \frac{ag' ( r )}3 \ge a \left| g' ( r )+g' ( q ) \right|.
\]
Hence we proved \eqref{3.4b} and the proof of (iii) is finished.
\end{proof}

%Let now $s^\pm_y:= \frac{1\pm f(y)} y$ for $y>0$, which clearly satisfy
%\[
%-a\le s^-_y\le \min \left\{a,\frac 1y \right\} \le \frac 1y \le s^+_y.
%\]
From \eqref{3.4a} we see that
$g(s^-_y)\le g(-a)<0<g(s^+_y)$ holds for all small enough $y>0$  (recall that $g(a)<g(-a)$) and so $(0,\sigma)\subseteq P_+$ for some $\sigma>0$.  Let us pick the largest such $\sigma$.  Since also $f'(0)=-a$, it follows from Lemma \ref{L.3.2}(i) that $f(y)=1-ay$ on $[0,p]$ for some $p\in(0,{a^{-1}}]$ (again pick the largest such $p$).  We first claim that $p={a^{-1}}$.

Assume therefore that $p<{a^{-1}}$.  Then $s^-_y=a<s^+_y$ for all $y\le p$, so $\sigma>p$.  Lemma~\ref{L.3.2}(i) then shows that $f(y)=1+a(y-2p)$ on $[p,\sigma]$.  Let $q\ge \sigma$ be the largest number such that $f(y)=1+a(y-2p)$ on $[p,q]$ (or let $q:=\infty$ if $f(y)=1+a(y-2p)$ on $[p,\infty)$).  For all $\delta\in(0,{a^{-1}}-p]$ let
\[
\calF^+\ni f_\delta(y):=
\begin{cases}
1-ay & y\in[0,p+\delta],
\\ \min \{ 1+a(y-2(p+\delta)), f(y)\}  & y> p+\delta.
\end{cases}
\]
Then $f_\delta=f-2a\delta$ on $[p+\delta,q)$ and $f_\delta=f$ on $[q_\delta,\infty)$ for some $q_\delta \searrow q$ as $\delta\to 0$ (the latter if $q<\infty$).  It follows that
\beq\lb{3.5}
\begin{split}
0\le \lim_{\delta\to 0^+} \frac {H^+(f_{\delta}) - H^+(f)}{2a\delta} 
& = \int_{p}^{q} \big[ -h_r(y, 1+f(y)) + h_r(y, 1-f(y)) \big] \, dy
\\ & = \int_{p}^{q}  \frac{ -g ({(2-2ap)y^{-1}+a} ) + g (2apy^{-1}-a) } {y^{3}} \, dy.
\end{split}
\eeq
Arguments of both functions in the last integral are strictly decreasing in $y$, and for $y\in[p,\infty)$ we have
\[
-a<2apy^{-1}-a \le a < (2-2ap)y^{-1}+a.
\]
The properties of $g$ then imply that there is $q_0\in[p,\infty)$ such that the integrand is negative on $[p,q_0)$ and positive on $(q_0,\infty)$.  If we therefore show that
\beq \lb{3.6}
\int_{p}^{\infty}  \frac{ -g ({(2-2ap)y^{-1}+a} ) + g (2apy^{-1}-a) } {y^{3}} \, dy <0,
\eeq
it will follow that this integral with any $q\in(p,\infty]$ in place of $\infty$ is also negative, which will contradict  \eqref{3.5}.

%-------------------
%
%We have
%\begin{align*}
%y^{-3}g(By^{-1} \pm a) & = B^{-1}y^{-2} (By^{-1} \pm a)g(By^{-1} \pm a)  \mp aB^{-1}y^{-2} g(By^{-1} \pm a)
%\\ & = B^{-2} \frac d{dy} G_1(By^{-1} \pm a) \pm B^{-2} a \frac d{dy}G_0(By^{-1} \pm a),
%\end{align*}
%where
%\begin{align*}
%G_1(s) & := \frac {4s^3+3As^2+A}{2(1+s^2)^2} ,
%\\ G_0(s) & :=\frac {-s^2-As+1}{(1+s^2)^2}
%\end{align*}
%and so $G_1'(s)=-sg(s)$ and $G_0'(s)=g(s)$.  Therefore the  integral in \eqref{3.6} equals
%\[
%M_p:=M^+(p)-M^+(\infty) + M^-(\infty)-M^-(p),
%\]
%where
%\begin{align*}
%M^+(y) & :=(2-2ap)^{-2} G_1((2-2ap)y^{-1}+a) + (2-2ap)^{-2} a G_0((2-2ap)y^{-1}+a),
%\\ M^-(y) & :=(2ap)^{-2} G_1(2apy^{-1}-a) - (2ap)^{-2} a G_0(2apy^{-1}-a).
%\end{align*}
%If we denote $G_2^\pm:=G_1\pm aG_0$, we obtain
%\[
%4 (ap)^2 M_p= \frac{G_2^+(2p^{-1}-a) - G_2^+(a)}{((ap)^{-1}-1)^2} +  {G_2^-(-a) - G_2^-(a)}
% = \frac{G_2^+(2a((ap)^{-1}-1)+a) - G_2^+(a)}{((ap)^{-1}-1)^2} -\frac{2a}{1+a^2} .
%\]
%Since $(G_2^+)'(a)=0$
%
%----------------

One can directly integrate in \eqref{3.6}, but it will be simpler to instead  keep the limit in \eqref{3.5} outside of the integral.  We see that the left-hand side of \eqref{3.6} is
\[
\lim_{\delta\to 0^+} \frac {K_{\delta} - K_0}{2a\delta} ,
\]
where
\[
K_\delta:= \int_{p}^{\infty} \big[ h(y, 2+a(y-2p-2\delta)) + h(y, a(2p+2\delta-y)) \big] \, dy.
\]
%\begin{align*}
%H^+(f_{\delta}) - H^+(f) & =   \int_{p}^{q} \big[ h(y, 2+a(y-2p-2\delta)) + h(y, a(2p+2\delta-y)) \big] \, dy
%\\ &-  \int_{p}^{q} \big[ h(y, 2+a(y-2p)) + h(y, a(2p-y)) \big] \, dy + o(\delta).
%\end{align*}
We have
\beq \lb{3.7}
h(y,B\pm ay)=   
\frac{y^2 - (B \pm ay)^2 - A y(B \pm ay) }{ [y^2+(B \pm ay)^2]^2 } 
%\frac{1 - (\frac By \pm a)^2 - A (\frac By \pm a) }{ y^2(1+(\frac By \pm a)^2)^2 } 
%= - \frac 1B \,\frac d{dy} \, \frac {(\frac By \pm a)+\frac A2} {1+(\frac By \pm a)^2 }
= \partial_y G^\pm(B,y),
\eeq
where (recall that $A=\frac {1-a^2}a$)
\begin{align*}
G^+(B,y) &:=
%\frac 1{2Ba} - \frac 1{2B}\,\frac {2By +(A\pm 2a)y^2} {y^2+( B \pm ay)^2 } =
 \frac 1{2a}\, \frac {B}{y^2+( B + ay)^2 },
\\  G^-(B,y) &:=
 \frac{1-3a^2}{2a(1+a^2)} \, \frac {B}{y^2+( B - ay)^2 } -  \frac{2(1-a^2)}{1+a^2}  \, \frac {y} {y^2+( B - ay)^2 } ,
\end{align*}
and then
\begin{align*}
\partial_B G^+(B,y) &=   \frac 1{2a}\, \frac {(1+a^2)y^2-B^2}{[y^2+( B + ay)^2]^2 } ,
%\frac {(A\pm 2a)y^2[y^2+(B\pm ay)^2] + 4By(B\pm ay+\frac {Ay}2)(B\pm ay) } {2B^2[y^2+( B \pm ay)^2]^2 },
\\  \partial_B G^-(B,y) &=   \frac 1{2a(1+a^2)}\, \frac {(3a^2-1)B^2 + 8a(1-a^2)yB+(1-10a^2+5a^4)y^2}{[y^2+( B - ay)^2]^2 } .
\end{align*}
This and $p\in (0,{a^{-1}})$ means that if we first integrate in $y$ and then take $\delta\to 0^+$, we obtain
\begin{align*}
%0\le \lim_{\delta\to 0^+} \frac {H^+(f_{\delta}) - H^+(f)}{2a\delta} 
\lim_{\delta\to 0^+} \frac {K_{\delta} - K_0}{\delta} 
 & = 2a \, \partial_B G^+( 2-2ap,p)  - 2a \, \partial_B G^-( 2ap,p) 
 %- 2a(G^+_B( 2-2ap,q) - G^-_B( 2ap,q) )
 \\ & =  \frac {(1+a^2)p^2-4(1-ap)^2}{[p^2+( 2-ap)^2]^2 } - \frac {(1+a^2)p^2}{[p^2+( ap)^2]^2 } 
 < - \frac {4(1-ap)^2}{[p^2+( 2-ap)^2]^2 }<0 ,
\end{align*}
proving \eqref{3.6}.  This yields a contradiction, so indeed $p={a^{-1}}$ and $f(y)=1-ay$ on $[0,{a^{-1}}]$.

Next, let us consider $f$ immediately to the right of $y={a^{-1}}$.  One option is $f\equiv 0$ on $[{a^{-1}},{a^{-1}}+\eps]$ for some $\eps>0$, and we analyze it later.  Otherwise we have ${a^{-1}}\in\partial P$, where  $P:=\{y>{a^{-1}} \,|\, f(y)>0\}$.  Since $(1\pm f({a^{-1}}))/{a^{-1}}=a$ and $g'(a)>0$, it follows that $P\cap({a^{-1}},{a^{-1}}+\eps)\subseteq P_+$ for some $\eps>0$.  So Lemma \ref{L.3.2}(i) applies on this set and we see that $f(y)=a(y-{a^{-1}})=ay-1$ on $({a^{-1}},{a^{-1}}+\eps)$.
% (recall that ${a^{-1}}\in\partial P$ and $f({a^{-1}})=0$).

If $f(y)=ay-1$ does not hold on all of $[a^{-1},\infty)$, let $q\in[a^{-1}+\eps,\infty)$ be the largest number such that this equality holds on $[a^{-1},q]$.  We can now proceed similarly to the case $p<a^{-1}$ above to obtain contradiction with $f$ being a minimizer of $H^+$ on $\calF^+$.  If
\[
f_\delta(y):=
\begin{cases}
\max\{1-ay,0\} & y\in[0,a^{-1}+\delta],
\\ \min \{ ay-1-a\delta, f(y)\}  & y> a^{-1}+\delta,
\end{cases}
\]
then
\beq\lb{3.8}
0\le \lim_{\delta\to 0^+} \frac {H^+(f_{\delta}) - H^+(f)}{a\delta} 
 = \int_{a^{-1}}^{q}  \frac{ -g ({a} ) + g (2y^{-1}-a) } {y^{3}} \, dy.
\eeq
The integrand is again negative on $(a^{-1},q_0)$ and positive on $(q_0,\infty)$ for some $q_0\in(a^{-1},\infty)$, so  it suffices to show that
\beq\lb{3.9}
\int_{a^{-1}}^{\infty}  \frac{ -g ({a} ) + g (2y^{-1}-a) } {y^{3}} \, dy \le 0
\eeq
in order to get a contradiction with \eqref{3.8} (since also $q<\infty$).  Letting
\[
K_\delta:= \int_{a^{-1}}^{\infty} \big[ h(y, ay-a\delta) + h(y, 2+a\delta-ay)) \big] \, dy,
\]
we again see that the left-hand side of \eqref{3.9} equals 
\begin{align*}
%0\le \lim_{\delta\to 0^+} \frac {H^+(f_{\delta}) - H^+(f)}{2a\delta} 
\lim_{\delta\to 0^+} \frac {K_{\delta} - K_0}{a\delta} 
  =  \partial_B G^+( 0,a^{-1})  -  \partial_B G^-( 2,a^{-1}) = \frac a{2(1+a^2)} - \frac a{2(1+a^2)} =0.
\end{align*}
So we proved \eqref{3.9}, obtaining a contradiction, which then forces $f(y)=a|y-a^{-1}|$ on $[0,\infty)$.

It remains to show that if $f\equiv 0$ on $[{a^{-1}},{a^{-1}}+\eps]$ for some $\eps>0$, then $f$ cannot be a minimizer of $H^+$ on $\calF^+$.  Assume first that $f\not\equiv 0$ on $[{a^{-1}},\infty)$, and let $p>a^{-1}$ be the largest number such that
$f\equiv 0$ on $[{a^{-1}},p]$.  Let also $q:=\min\{y>p\,|\, f(y)=0\}\in (p,\infty]$.  Then Lemma \ref{L.3.2}(iii) shows that there is at most one number in $(p,q)\cap P_0$, and if such number $y_0$ does exist, then  
\beq\lb{3.9a}
(p,q)\cap P_+=(p,y_0) \qquad\text{and}\qquad (p,q)\cap P_-=(y_0,q).
\eeq
  If there is no such $y_0$, then $(p,q)$ is one of $P_\pm$, so we can pick $y_0\in\{p,q\}$ so that \eqref{3.9a} holds.

Then Lemma \ref{L.3.2}(i,ii) show that either $f(y)= a(y-p)$ for all $y\in[p,\infty)$ (when $q=\infty$) or 
\[
f(y)= 
\begin{cases}
a(y-p) & y\in[p,\frac {q+p}2],
\\ a(q-y)  & y\in[\frac {q+p}2,q]
\end{cases}
\]
 (when $q<\infty$, and then we must have $\frac {q+p}2\ge y_0$).  In the latter case we let
\[
f_\delta(y):=
\begin{cases}
\max  \left\{ f(y), a(y-p) \right\}  & y\in[0,\frac {q+p}2+\delta],
\\ \max \{ f(y), a(q+2\delta-y)\}  & y> \frac {q+p}2+\delta,
\end{cases}
\]
and then
\[
0\le \lim_{\delta\to 0^+} \frac {H^+(f_{\delta}) - H^+(f)}{2a\delta} 
 = \int_{\frac {q+p}2}^{q}  \frac 1{y^3} \left( g \left(\frac {1+f(y)}y \right) - g \left(\frac {1-f(y)}y \right) \right) dy<0
\]
(the last inequality follows from $(\frac {q+p}2,q)\subseteq P_-$), a contradiction.  In the former case we let
\[
f_\delta(y):= \max\{f(y), a(y-p+\delta)\}
\]
and then
\begin{align*}
0 & \le \lim_{\delta\to 0^+} \frac {H^+(f_{\delta}) - H^+(f)}{a\delta} 
 = -\partial_B G^+(1-ap,p) + \partial_B G^-(1+ap,p) 
\\ & =-\frac 1{2a}\, \frac {p^2+2ap-1}{(p^2+1)^2} +  \frac 1{2a(1+a^2)}\, \frac {p^2(1-3a^2)+p(6a-2a^3)+3a^2-1}{(p^2+1)^2} 
\\ & =- \frac {2a}{1+a^2}\, \frac {p^2-Ap-1}{(p^2+1)^2} <0,
\end{align*}
where at the end we used $p>a^{-1}$, together with $a^{-2}-Aa^{-1}-1=0$ and $(y^2-Ay-1)'>0$ on $(a^{-1},\infty)$.
This is again a contradiction, so it remains to consider the case $f\equiv 0$ on $[a^{-1},\infty)$.

In that case we pick any $y_0>s_3^{-1}$, so that there is $\eps>0$ such that
\beq\lb{3.10}
\partial_{rr} h(y,r)=\frac 1{y^4} \, g' \left(\frac ry \right)<0
\eeq
for all $(y,r)\in B_{2\eps}((y_0,1))$.  Then we pick any $f_0\in \calF^+$ that is equal to $f$ outside $(y_0-\eps,y_0+\eps)$ and it is positive on this interval (from $f\in\calF^+$ we have $f_0\le a\eps$ there).  Then
\[
H^+(f_0)- H^+(f)=  \int_{y_0-\eps}^{y_0+\eps}  \big[ h(y, 1+f_0(y)) + h(y, 1-f_0(y)) - 2h(y, 1) \big] \, dy <0
\]
by \eqref{3.10}.  This again yields a contradiction with the assumption that $f$ is a minimizer of $H^+$ on $\calF^+$, so the proof of Lemma \ref{L.2.2} is finished.

%%%%%%%%%%%%%%%%%%%%%%%%%%%%%%%%%%%%%%%%%%%%%%

\end{document}